\newtheorem{theorem}{Theorem}[section]
\newtheorem{lemma}[theorem]{Lemma}
\newtheorem{proposition}[theorem]{Proposition}
\newtheorem{corollary}[theorem]{Corollary}
\theoremstyle{definition}
\newtheorem{definition}[theorem]{Definition}
\newtheorem{conjecture}[theorem]{Conjecture}
\newtheorem{remark}[theorem]{Remark}
\newcommand{\gr}{\mathrm{gr}}
\newcommand{\Spec}{\mathrm{Spec}}
\newcommand{\Pid}{\mathrm{\langle P \rangle}}
\newcommand{\bd}{\mathrm{\bold{d}}}
\begin{document}

\title{On the Lower Central Series Quotients of a Graded Associative Algebra}

\author{Martina Balagovi\' c \and Anirudha Balasubramanian}
\address{Department of Mathematics,  Massachusetts Institute
of Technology, Cambridge, MA 02139, USA }
\email{martinab@math.mit.edu, anirudhab@gmail.com}
\maketitle

\maketitle

\begin{abstract} We continue the study of the lower central series $L_{i}(A)$ and its successive quotients $B_{i}(A)$ of a noncommutative associative algebra $A$, defined by $L_{1}(A)=A, L_{i+1}(A)=[A,L_{i}(A)]$, and $B_{i}(A)=L_{i}(A)/L_{i+1}(A)$. We describe $B_{2}(A)$ for $A$ a quotient of the free algebra on two or three generators by the two-sided ideal generated by a generic homogeneous element. We prove that it is  isomorphic to a certain quotient of K\"{a}hler differentials on the non-smooth variety associated to the abelianization of $A$. 

\end{abstract}

\section{Introduction}

For an associative algebra $A$, define its lower central series by $$L_{1}(A)=A, L_{i+1}(A)=[A,L_{i}(A)].$$ We are interested in the successive quotients of these subspaces $B_{i}(A)=L_{i}(A)/L_{i+1}(A),$ more precisely in $B_2(A)$.

The study of these quotients began in \cite{FS}, where the interest was focused on the case where $A=A_n$ is the free algebra over $\mathbb{C}$ with $n$ generators. One of the main results of this paper was that the space $B_{2}(A_{n})$ is isomorphic, as a graded vector space, to the space $\Omega^{\mathrm{even}>0}_{\mathrm{closed}}(\mathbb{C}^n)$ of closed differential forms of positive even degree on the space $\mathbb{C}^{n}$, the algebraic variety associated to the abelianization $\mathbb{C}[x_{1},\ldots x_{n}]$ of the algebra $A_{n}$. We will call this map the FS isomorphism. 

Next, \cite{DKM} provided an explicit basis for $B_2(A_n)$ and a new proof of the isomorphism with differential forms, along with partial results about $B_3(A_2)$. The appendix of the paper, by P. Etingof, studies $B_{2}(A)$ for any associative algebra $A$. There it is proved that if the variety associated to the abelianization $A_{ab}$ of $A$ is smooth, and certain other mild conditions are satisfied, then there exists an equivalent of the FS isomorphism between $B_{2}(A)$ and $\Omega^{\mathrm{even}>0}_{\mathrm{closed}}(\Spec(A_{ab}))$.


This paper studies the case when $A=A_{n}/\langle P \rangle$ is a quotient of the free algebra on $n$ generators by an ideal $\left< P \right>$ generated by a single homogeneous relation $P$. The abelianization of this algebra is no longer smooth, as the origin is not a smooth point if $\text{deg}(P)>1$. Hence, results from the appendix of \cite{DKM} no longer apply. We attempt to determine the extent to which singularities influence the structure of the $B_{2}$. 

Our main result is an analogue to the FS isomorphism:
\begin{theorem}
For generic $P$, $n=2,3$, the algebra $A=A_{n}/\langle P \rangle$, $\Omega^0=A_{ab}$, $\Omega ^1$ the module of K\"{a}hler differentials over $A_{ab}$, and $\mathbf{d}:\Omega^0\to \Omega^1$ the differential map, there exists an isomorphism of graded vector spaces
$$\phi: B_{2}(A) \to \Omega^{1}/{\bold d}\Omega^{0}.$$ \label{one}
\end{theorem}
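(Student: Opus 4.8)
The plan is to construct a natural \emph{surjection} $\phi: B_2(A) \to \Omega^1/\mathbf{d}\Omega^0$ by hand, and then to upgrade it to an isomorphism by showing that the two graded vector spaces have equal Hilbert series. First I would define $\phi$ on representatives: for $a, b \in A$ with images $\bar a, \bar b \in A_{ab}$, send the commutator $[a,b] \in L_2(A)$ to $\bar a\, \mathbf{d}\bar b$, read modulo $\mathbf{d}\Omega^0$. Well-definedness follows, as in the smooth case treated in the appendix of \cite{DKM}, by checking that the two defining identities of the commutator are matched modulo $\mathbf{d}\Omega^0$ by the de Rham differential: antisymmetry corresponds to $\bar a\, \mathbf{d}\bar b + \bar b\, \mathbf{d}\bar a = \mathbf{d}(\bar a\bar b) \in \mathbf{d}\Omega^0$, and the Leibniz identity $[ab,c]=a[b,c]+[a,c]b$ corresponds to the Leibniz rule for $\mathbf{d}$. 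Since $\overline{[a,b]}=0$ in $A_{ab}$, the image of $L_3(A) = [A,L_2(A)]$ is $\mathbf{d}(0)=0$, so $\phi$ descends to $B_2(A)=L_2(A)/L_3(A)$. Surjectivity is then immediate, because every $1$-form $\sum_i f_i\, \mathbf{d}x_i$ equals $\sum_i \phi([\tilde f_i, x_i])$ for any lifts $\tilde f_i \in A$ of the $f_i$.

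It remains to compute both Hilbert series. Using that $\bar P$, the abelianization of $P$, is for generic $P$ an irreducible homogeneous nonzerodivisor of degree $d = \deg P$ in $R = \mathbb{C}[x_1,\ldots,x_n]$, the abelianization $\Omega^0 = A_{ab} = R/(\bar P)$ has Hilbert series $(1 - t^d)/(1-t)^n$. The conormal presentation
$$A_{ab}(-d) \xrightarrow{\ \cdot\, \mathbf{d}\bar P\ } \bigoplus_{i=1}^n A_{ab}(-1) \longrightarrow \Omega^1 \longrightarrow 0$$
has injective left map, since $A_{ab}$ is a domain in which some $\partial_i \bar P \neq 0$, so $H_{\Omega^1}(t) = (nt - t^d)\,H_{A_{ab}}(t)$. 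Quotienting by $\mathbf{d}\Omega^0$, whose only kernel in $A_{ab}$ is the constants, yields
$$H_{\Omega^1/\mathbf{d}\Omega^0}(t) = (nt - t^d - 1)\,\frac{1 - t^d}{(1-t)^n} + 1.$$
On the other side, genericity of $P$ makes the single relation inert, so $A = A_n/\langle P\rangle$ has Hilbert series $H_A(t) = (1 - nt + t^d)^{-1}$, whence $H_{L_2(A)}(t) = H_A(t) - H_{A_{ab}}(t)$.

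The crux, and what I expect to be the main obstacle, is computing $H_{L_3(A)}(t)$, equivalently pinning down $\dim B_2(A)$: the space $L_3(A) = [A,[A,A]]$ depends on the full noncommutative structure of $A$ and is not controlled by the abelianization alone, so there is no formal reduction to the commutative computation above. The route I would pursue exploits that $\Spec(A_{ab})$ is smooth away from the origin, so that by the smooth-case result of \cite{DKM} the discrepancy between $B_2(A)$ and $\Omega^1/\mathbf{d}\Omega^0$ is concentrated at the singular point and is governed by a finite local correction term attached to the generic degree-$d$ hypersurface singularity. I would compute this correction explicitly for $n\in\{2,3\}$, where the relevant resolutions and local computations remain tractable, and check that it reproduces the closed form for $H_{\Omega^1/\mathbf{d}\Omega^0}(t)$ displayed above, giving $H_{B_2(A)} = H_{\Omega^1/\mathbf{d}\Omega^0}$. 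Combined with the surjectivity of $\phi$, equality of Hilbert series forces $\phi$ to be an isomorphism of graded vector spaces. I anticipate that the restriction to $n=2,3$ is exactly the range in which this singular contribution can be controlled by hand, the computation becoming unwieldy for larger $n$.
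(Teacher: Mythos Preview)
Your construction of the surjection $\phi$ and your computation of $h_{\Omega^1/\mathbf{d}\Omega^0}(t)$ are fine and match the paper. The gap is in the step you yourself flag as the crux: determining $h_{B_2(A)}(t)$. The proposed route---invoke the smooth case of \cite{DKM} away from the origin and absorb the singularity into a ``finite local correction term''---is not a real argument. The space $B_2(A)$ is a quotient of the noncommutative object $L_2(A)\subset A$; it is not a sheaf over $\Spec A_{ab}$, and there is no localization principle that lets you read off $\dim B_2(A)[l]$ from local data near the singular point. Nothing in \cite{DKM} gives such a statement, and you have not indicated what computation would actually produce the correction. As written, this step is a hope rather than a method.

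What the paper actually does is quite different and avoids any local analysis of the singularity. First, it passes to the \emph{smooth} inhomogeneous deformation $A_n/\langle P-1\rangle$: using that for generic $P$ the algebra $A_n/\langle P\rangle$ is an NCCI and an asymptotic RCI (results from \cite{EG}), one gets a graded surjection $B_2(A_n/\langle P\rangle)\twoheadrightarrow \gr B_2(A_n/\langle P-1\rangle)$, and the smooth-case theorem of \cite{DKM} together with $\gr(\Omega^1_{P=1}/\mathbf{d}\Omega^0_{P=1})\cong \Omega^1_{P=0}/\mathbf{d}\Omega^0_{P=0}$ identifies the target with $\Omega^1_{P=0}/\mathbf{d}\Omega^0_{P=0}$. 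This already yields $\dim B_2(A_n/\langle P\rangle)[l]\ge \dim(\Omega^1/\mathbf{d}\Omega^0)[l]$. For the reverse inequality the paper does \emph{not} compute $h_{B_2}$ for generic $P$ directly; instead it computes an explicit basis of $B_2(A_n/\langle x^d+y^d\rangle)$ by hand for $n=2,3$ (using the relations of Lemma~\ref{DKML} and a degree argument to show independence), checks that its Hilbert series equals $h_{\Omega^1/\mathbf{d}\Omega^0}$, and then uses lower semicontinuity of $\dim B_2(A_n/\langle P\rangle)[l]$ in $P$ to push the equality to generic $P$. The restriction to $n\le 3$ enters because the explicit basis argument for $B_2(A_n/\langle x^d+y^d\rangle)$ is what is carried out, not because of any local tractability of the singularity.
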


Thus, the structure of $B_{2}(A_{n}/\left< P\right>)$ is not affected by the singularity for generic $P$ (although there are special polynomials $P$ for which the FS map is not an isomorphism, because the space $B_{2}(A_{n}/\left< P\right>)$ has graded pieces of larger dimension than the corresponding  $\Omega^{1}/{\bold d}\Omega^{0}$; examples include $n=2, 3,\,  P=x^2y$, or see Remark \ref{remark1}). This suggests that  a certain wider class of ``mild" singularities of $A_{ab}$ does not affect the structure of $B_{2}(A)$. While the proof that FS map is really an isomorphism from \cite{DKM} works only in the smooth case, and our proof works only for a special kind of singularity, it would be interesting to find a unified approach, and the one that would potentially extend this to a wider class of noncommutative algebras $A$.

The organization of the paper is as follows. Section 2 contains main definitions, some results from \cite{DKM}, and several technical propositions that we will use in calculations in the next sections. Sections 3 and 4 offer explicit bases of $B_{2}(A_{n}/\left< P \right>)$ for $P=x^d+y^d$ and $n=2, 3$ respectively. Section 5 uses the results of the appendix of \cite{DKM} to connect our case ($A=A_{n}/\left< P \right>$) and the corresponding smooth case ($A=A_{n}/\left< P-1 \right>$). Section 6 then uses the maps constructed in section 5 and data about dimensions in the special case of sections 3 and 4 to prove the main theorem for generic $P$. The Appendix contains some numerical data that we obtained with MAGMA algebra software \cite{Magma} in the early stages of the project. 

\subsection*{Acknowledgements} The authors are very grateful to Pavel Etingof for introducing us to this area of research, suggesting the problem, and devoting a great deal of energy to it through many helpful conversations. We are also grateful to David Jordan for help with the software MAGMA, and to Travis Schedler for explaining to us the results of \cite{EG}. In earlier versions here was an error in some data in Appendix A, which Teng Fei kindly pointed out; we thank him for that. The work of both authors was supported by the Research Science Institute, and conducted in the Department of Mathematics at MIT. The work of M.B. was partially supported by the NSF grant DMS-0504847.

\section{Preliminaries}

Throughout the paper, all the algebras are associative over $\mathbb{C}$ and with a unit. For $A$ such an algebra and $B,C$ subspaces of it, denote by $[B,C]$ the subspace of $A$ spanned by all the elements of the form $[b,c], \, b\in B, \, c\in C$.

Let us begin with definitions of the spaces upon which this paper will focus.
\begin{definition}
For any associative algebra $A$, define its {\it lower central series} to be the sequence $L_{i}(A)$ of subspaces of it defined inductively by $$L_{1}(A)=A \qquad L_{i+1}(A)=[A,L_{i}(A)],\, \,  i\ge 1.$$
Denote its successive quotients by $$B_{i}(A)=L_{i}(A)/L_{i+1}(A), i\ge 1.$$
\end{definition}

\begin{definition}
For $R$ a commutative associative algebra, let $\Omega^{0}(R)$ and  $\Omega^{1}(R)$  be $R$-modules as follows: $\Omega^{0}(R)=R$, and $\Omega^{1}(R)$ is defined by generators $\{{\bold d} f, f \in R\}$  and relations $${\bold d}(f\cdot g)=f\cdot {\bold d} g+g\cdot {\bold d} f  \qquad {\bold d}(\alpha f+\beta g)=\alpha{\bold d} f+\beta{\bold d}g\, \, \alpha, \beta\in \mathbb{C}.$$ 
The module $\Omega^{1}(R)$ is called the module of K\"{a}hler differentials of $R$. There is a natural map
 ${\bold d}:\Omega^{0}(R) \to \Omega^{1}(R)$ given by ${\bold d}(f)={\bold d} f$.  
\end{definition}

\begin{remark}If $\Spec R$ is smooth, $\Omega ^i(R)$ are exactly the spaces of regular differential $i$-forms on the algebraic variety $\Spec R$, with $\bold{d}$ the differential map. If additionally $\dim \Spec R\le 2$ and the space has zero deRham cohomology in positive degrees (e.g. $\mathbb{C}^2$), then the space of even closed forms is exactly $\Omega^1(R)/\bold{d}\Omega^0(R)$.  In this light, our results are an extension of the results of \cite{FS} and \cite{DKM}.
\end{remark}

For $A$ an associative algebra, denote by $A_{ab}$ its abelianization, i.e. the commutative associative algebra $A_{ab}=A/(A[A,A]A)$. Let  $A_{n}=\mathbb{C} \langle x_{1},...,x_{n}\rangle$ be the free algebra in $n$ generators, graded by $\deg x_{i}=1$. We will consider the quotient of $A_n$ by an associative ideal generated by one homogeneous relation $P$ of degree $d$. We will denote this ideal by $\langle P \rangle$ and the algebra by $A=A_{n}/\langle P \rangle.$ As $P$ is homogeneous, $A$ inherits the grading from $A_{n}$. In cases where we have few variables, we will call them  $x$, $y$, $z$, etc. instead of $x_1$, $x_2$, $x_3$, etc. 

We will denote the $i$-th component of any graded vector space $W$ by $W[i]$, and its Hilbert-Poincar\' e series by $h_{W}(t)=\sum_{i} \dim W[i]t^i$. 

For brevity, we shall denote $\Omega^{i}(\mathbb{C}[x_1,...x_n]/ \langle P \rangle)$ by $\Omega^{i}_{P=0}$. It will always be clear which $n$ we are considering. Note that the $\Omega^{i}$ are also graded by total degree of polynomial, and that ${\bold d}$ is a homogeneous map.

We shall now prove a few lemmata used extensively in the paper.

\begin{lemma}
\label{isos} The space $B_{2}(A_{n}/\langle P \rangle)$ is a quotient of $B_{2}(A_{n})$ by the image in $B_{2}(A_{n})$ of the intersection of $\langle P \rangle$ with $L_{2}(A_{n})$.

\begin{proof}
For brevity, write $L_{i}$ instead of $L_{i}(A_{n})$. Using the definition, the second and third isomorphism theorems, and the fact that the same holds for lower central series: $L_{i}(A_{n}/\Pid)=L_{i}(A_{n})/\left( L_{i}(A_{n})\cap \Pid \right)$, we get:
\begin{eqnarray*}
 B_{2}(A_{n}/\Pid) &=& \frac{L_{2}(A_{n}/ \Pid)}{L_{3}(A_{n}/\Pid)}\\ &=&  \frac{L_{2}/(L_{2} \cap \Pid)}{L_{3}/(L_{3} \cap \Pid)}\\ &=&  \frac{L_{2}/(L_{2} \cap \Pid)}{L_{3}/(L_{3} \cap \Pid \cap L_{2})}\\
 &=&\frac{L_{2}/(L_{2} \cap \Pid)}{(L_{3}+ \Pid \cap L_{2})/(L_{2}\cap \Pid  )}\\
 &=& L_{2}/(L_{3}+ \Pid\cap L_{2}),
\end{eqnarray*}
which is isomorphic to the quotient of $B_2(A_n)$ by the image of the intersection of the ideal $ \langle P \rangle$ with  $L_{2}(A_{n}),$ as claimed.
\end{proof}
\end{lemma}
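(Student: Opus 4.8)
The plan is to reduce everything to statements about the free algebra $A_n$ and its two-sided ideal $I := \langle P\rangle$, writing $L_i := L_i(A_n)$ throughout. The key structural input, which I would establish first, is that taking the lower central series commutes with passing to the quotient by a two-sided ideal:
$$L_i(A_n/I) \;=\; (L_i + I)/I \;\cong\; L_i/(L_i\cap I).$$
I would prove this by induction on $i$. The base case $i=1$ is immediate. For the inductive step I would compute $L_{i+1}(A_n/I) = [A_n/I,\, L_i(A_n/I)]$ using the inductive hypothesis, and observe that the commutator descends to the quotient, so that $[A_n/I,\,(L_i+I)/I]$ is the image of $[A_n,\, L_i + I]$. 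The crucial point is that $[A_n, I] \subseteq I$ precisely because $I$ is a \emph{two-sided} ideal; this forces $[A_n, L_i + I] \subseteq L_{i+1} + I$ and hence $L_{i+1}(A_n/I) = (L_{i+1}+I)/I$. The second isomorphism theorem then gives the stated form. I expect this to be the main (and essentially the only) obstacle, since the remainder is formal manipulation with the isomorphism theorems.

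With this in hand, I would write
$$B_2(A_n/I) = \frac{L_2(A_n/I)}{L_3(A_n/I)} = \frac{L_2/(L_2\cap I)}{L_3/(L_3\cap I)}.$$
Before applying the third isomorphism theorem I would use the inclusion $L_3 \subseteq L_2$ to rewrite $L_3\cap I = L_3\cap I\cap L_2$ and, more importantly, to identify the image of $L_3/(L_3\cap I)$ inside $L_2/(L_2\cap I)$ as $(L_3 + L_2\cap I)/(L_2\cap I)$. One must check that the natural map $L_3/(L_3\cap I) \to L_2/(L_2\cap I)$ is injective: its kernel consists of elements of $L_3$ lying in $L_2\cap I$, hence in $L_3\cap I$, so it is trivial. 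This guarantees that the quotient of quotients genuinely computes $L_3(A_n/I)$ sitting inside $L_2(A_n/I)$.

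Applying the third isomorphism theorem then collapses the expression to
$$B_2(A_n/I) \;\cong\; L_2/(L_3 + I\cap L_2).$$
Finally I would observe that, since $L_3 \subseteq L_2$, this is exactly the quotient of $B_2(A_n) = L_2/L_3$ by the subspace $(L_3 + I\cap L_2)/L_3$, which is precisely the image of $I\cap L_2 = \langle P\rangle \cap L_2(A_n)$ under the projection $L_2 \twoheadrightarrow B_2(A_n)$. This is the claimed description. Since $P$ is homogeneous, $I$ is a graded ideal and every map appearing above is homogeneous, so the identification is one of graded vector spaces.
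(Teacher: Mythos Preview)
Your proposal is correct and follows essentially the same route as the paper: reduce to $L_i(A_n/I)\cong L_i/(L_i\cap I)$, then apply the second and third isomorphism theorems to obtain $B_2(A_n/I)\cong L_2/(L_3 + I\cap L_2)$. The only difference is that you supply an inductive proof of $L_i(A_n/I)=(L_i+I)/I$ using $[A_n,I]\subseteq I$, and explicitly verify injectivity of $L_3/(L_3\cap I)\hookrightarrow L_2/(L_2\cap I)$, both of which the paper simply asserts; otherwise the arguments are the same.
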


This lemma allows us to use the explicit bases from Theorems 2.1 and 2.2 of \cite{DKM} in our context. Namely, we use it to conclude that the image under the quotient map of the basis for $B_{2}(A_{n})$ spans $B_{2}(A_{n}/\Pid)$, with the only new relations contained in $\Pid \cap L_{2}$.

Another result form \cite{DKM} generalizes easily to our context:

\begin{lemma}
\label{DKML}
For $q_{i}$, $Q$, $a$, $b$, and $c$ arbitrary elements of $A_n/\left< P \right>$, the following relations hold in $B_{2}(A_{n}/\langle P \rangle$:
\begin{enumerate}
\item $[Q, q_{1}q_{2}...q_{n}]=\sum_{i=1}^{n}\left[q_{i+1}q_{i+2}\ldots q_{n} Q q_{1}q_{2}\ldots q_{i-1} , q_{i}\right]$
\item $[ab,c]=[ba,c]$
\item $[a^{l}b,a^{k}]=\frac{k}{l+k}[b, a^{l+k}]$
\end{enumerate}
\end{lemma}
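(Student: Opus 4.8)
The plan is to establish the three relations in order, deriving the later ones from the earlier ones; none of the steps will use anything specific about the ideal $\langle P\rangle$, so these are really identities in $B_2$ of an arbitrary associative algebra.

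I expect relation (1) to hold already as an exact identity in the algebra $A_n/\langle P\rangle$ (not merely modulo $L_3$), and I would prove it by a telescoping computation. Expanding the $i$-th commutator on the right-hand side and setting $A_j := q_j q_{j+1}\cdots q_n\, Q\, q_1\cdots q_{j-1}$ for $j=1,\dots,n+1$ (with the convention that the leading block $q_{n+1}\cdots q_n$ is empty and the trailing block $q_1\cdots q_0$ is empty), one checks that
$$[q_{i+1}\cdots q_n\, Q\, q_1\cdots q_{i-1},\, q_i] = A_{i+1}-A_i,$$
since the first summand $q_{i+1}\cdots q_n\, Q\, q_1\cdots q_{i-1}q_i$ is exactly $A_{i+1}$ and the second is $A_i$. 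Summing over $i$ then telescopes:
$$\sum_{i=1}^n \bigl(A_{i+1}-A_i\bigr) = A_{n+1}-A_1 = Q\,q_1\cdots q_n - q_1\cdots q_n\,Q = [Q,\,q_1\cdots q_n],$$
which is the left-hand side.

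For relation (2), the key observation is that $[ab,c]-[ba,c] = [ab-ba,\,c] = [[a,b],c]$. Since $[a,b]\in L_2(A_n/\langle P\rangle)$, we have $[[a,b],c] = -[c,[a,b]] \in [A_n/\langle P\rangle,\, L_2] = L_3$, so $[ab,c]$ and $[ba,c]$ have the same image in $B_2 = L_2/L_3$. This cyclic invariance of the first argument of a commutator is the tool I would use for the final relation. For relation (3) I would apply relation (1) to both sides, expanding the relevant power of $a$ as a product of copies of $a$. Applying (1) to $[a^l b, a^k]$ with $Q = a^l b$ and $q_1=\cdots=q_k=a$ gives a sum of $k$ commutators of the form $[a^{k-i}a^l b\, a^{i-1},\, a]$; using relation (2) to cycle the trailing $a^{i-1}$ to the front collapses every term to $[a^{l+k-1}b,\,a]$, whence $[a^l b, a^k] = k\,[a^{l+k-1}b,\,a]$ in $B_2$. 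The same procedure applied to $[b, a^{l+k}]$ (with $Q=b$ and $l+k$ factors of $a$) yields $l+k$ identical terms, so $[b, a^{l+k}] = (l+k)\,[a^{l+k-1}b,\,a]$. Eliminating the common quantity $[a^{l+k-1}b,\,a]$ gives $[a^l b, a^k] = \tfrac{k}{l+k}[b, a^{l+k}]$.

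The only real obstacle is the bookkeeping in relation (3): one must verify that after expanding via relation (1) each term genuinely reduces to the single common commutator $[a^{l+k-1}b,\,a]$ under the cyclic move of relation (2), and that the two term-counts ($k$ versus $l+k$) are correct. Everything else is purely formal.
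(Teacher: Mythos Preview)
Your argument is correct. All three computations check out: relation (1) is indeed an exact identity in the algebra via the telescoping sum $A_{i+1}-A_i$; relation (2) follows from $[ab,c]-[ba,c]=[[a,b],c]\in L_3$; and relation (3) follows from (1) and (2) with the term counts $k$ and $l+k$ exactly as you say.

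The paper itself does not carry out these computations but simply cites the proof of Proposition~2.1 in \cite{DKM}, where the identities are established for $B_2(A_n)$, and then invokes Lemma~\ref{isos} to pass to the quotient $B_2(A_n/\langle P\rangle)$. Your approach is the same underlying computation as in \cite{DKM}, but you make the cleaner observation that nothing in the argument uses freeness, so the identities hold directly in $B_2$ of any associative algebra and the detour through Lemma~\ref{isos} is unnecessary. This buys a slightly more self-contained and more general statement at no extra cost.
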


\begin{proof}
\cite{DKM}, proof of Proposition 2.1, proves these relations hold in $B_{2}(A_{n})$. Because of Lemma \ref{isos}, they hold in $B_{2}(A_{n}/\Pid)$.
\end{proof}

Most of the calculations in the paper will be concerned with algebras $A_n/ \langle P \rangle, n=2,3$ for the specific polynomial $P=x^d+y^d, d \ge 2$. Let us prove a useful proposition for that algebra:

\begin{proposition}
\label{proposition1}
Let $n,d\geq 2$, $j>0$, $i\geq0$. In $B_{2}(A_{n}/\langle x^{d}+y^{d}\rangle)$, for arbitrary $a \in A_{n}/\langle x^{d}+y^{d}\rangle$
 $$\mathrm{span} \{[x^{i+d}y^{j},a], [x^{i+d}a,y^{j}], [ay^{j},x^{i+d}]\}=\mathrm{span}\{[x^{i+d}y^{j},a]\}.$$
\end{proposition}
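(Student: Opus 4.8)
My plan is to show that each of $[x^{i+d}a, y^j]$ and $[ay^j, x^{i+d}]$ is a scalar multiple of $[x^{i+d}y^j, a]$; since the reverse containment is automatic, this gives the claimed equality of spans. Throughout I abbreviate $E_1 = [x^{i+d}y^j, a]$, $E_2 = [x^{i+d}a, y^j]$, $E_3 = [ay^j, x^{i+d}]$, and I use freely the antisymmetry $[u,v] = -[v,u]$ of the bracket in $B_2$ together with the three relations of Lemma \ref{DKML}.

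First I establish one linear relation among the three brackets, valid in $B_2$ of any algebra. For arbitrary $p,q,r$, Lemma \ref{DKML}(1) applied to $[r, pq]$ gives $[r, pq] = [qr, p] + [rp, q]$, so by antisymmetry $[pq, r] + [qr, p] + [rp, q] = 0$. Taking $p = x^{i+d}$, $q = y^j$, $r = a$ and rewriting the first arguments with Lemma \ref{DKML}(2) turns this into $E_1 + E_2 + E_3 = 0$. This already collapses the span to at most two dimensions, but on its own it cannot force proportionality to $E_1$; for that I must exploit the defining relation of the algebra.

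The crucial extra input is the identity $x^d = -y^d$ in $A$. The idea is that the factor $x^d$ hidden inside $x^{i+d}$ can be traded for $-y^d$, after which the freshly created power of $y$ merges with an existing power of $y$ via Lemma \ref{DKML}(3) (and symmetrically for $x$). Concretely I would rewrite $E_3 = [ay^j, x^{i+d}]$ using $x^{i+d} = -x^i y^d$, then expand with antisymmetry and Lemma \ref{DKML}(1), cyclically rearrange the factors in each first argument with Lemma \ref{DKML}(2), and collapse the resulting adjacent powers with Lemma \ref{DKML}(3). This expresses $E_3$ as a combination of $E_1$ and one auxiliary bracket $[ax^i, y^{d+j}]$; running the same substitute--expand--collapse procedure on that auxiliary bracket expresses it back in terms of $E_1$ and $E_3$, so $E_3$ reappears and I obtain a single closed equation for it. Solving yields $E_3 = -\tfrac{d+i}{d+i+j}E_1$, whence the relation of the previous paragraph gives $E_2 = -\tfrac{j}{d+i+j}E_1$; both lie in $\mathrm{span}\{E_1\}$, as required.

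The main obstacle I anticipate is bookkeeping rather than any conceptual difficulty. Because substituting $x^d = -y^d$ reintroduces $E_3$ through the auxiliary bracket, the derivation is genuinely self-referential, and before solving I must check that the coefficient of $E_3$ produced by the expansion, which works out to $1 - \tfrac{ij}{(d+i)(d+j)} = \tfrac{d(d+i+j)}{(d+i)(d+j)}$, is nonzero; this is exactly where the hypotheses $d \geq 2$, $j > 0$, $i \geq 0$ guarantee well-definedness, since the denominators $d+i$, $d+j$, $d+i+j$ are then all positive. The second delicate point is sign management: each pass through $x^d = -y^d$ and each use of antisymmetry contributes a sign, and these must be tracked precisely for the coefficients to collapse to the clean values above.
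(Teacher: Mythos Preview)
Your approach is correct and essentially the same as the paper's: both establish the cyclic relation $E_1+E_2+E_3=0$ from Lemma~\ref{DKML}(1)--(2), then use the substitution $x^d=-y^d$ together with Lemma~\ref{DKML}(3) to produce a second, independent linear relation among the three brackets. The only cosmetic differences are that the paper starts the second computation from $E_2$ rather than $E_3$, and stops once it has verified independence (via $0\le \tfrac{i}{i+d}<1$, $0<\tfrac{j}{j+d}<1$) instead of solving explicitly for your coefficients $-\tfrac{d+i}{d+i+j}$ and $-\tfrac{j}{d+i+j}$; your nonvanishing check $1-\tfrac{ij}{(d+i)(d+j)}\ne 0$ is exactly the same condition.
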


\begin{proof}
We will find two linearly independent equations relating $[x^{i+d}y^{j},a]$, $[x^{i+d}a,y^{j}]$, and $[ay^{j},x^{i+d}].$

Lemma \ref{DKML} (1) and (2) imply
\begin{equation}
\label{eq:Star1}
[x^{i+d}y^{j},a]+[x^{i+d}a,y^{j}]+[ay^{j},x^{i+d}]=0
\end{equation}

To obtain another such equation, use Lemma \ref{DKML} and $x^d+y^d=0$ to get
\begin{eqnarray*}
[x^{i+d}a,y^{j}] &=& -[y^{d}x^{i}a,y^{j}]\\
&=& -\frac{j}{j+d} [x^{i}a,y^{j+d}]\\
& = & \frac{j}{j+d}\left( [y^{j+d}x^{i},a]+[y^{j+d}a,x^{i}]\right) \\
& = & -\frac{j}{j+d}\left( [y^{j}x^{i+d},a]+[x^{d}y^{j}a,x^{i}]\right) \\
& = & -\frac{j}{j+d}[x^{i+d}y^{j},a]-\frac{j}{j+d}\cdot \frac{i}{i+d}[y^{j}a,x^{i+d}]
\end{eqnarray*}

This identity is not a multiple of identity (\ref{eq:Star1}) above, as $0\le \frac{i}{i+d}<1$ and $0<\frac{j}{j+d}<1$. Thus, we can eliminate two of the three terms of our initial spanning set. It is now easy to see that $[x^{i+d}a,y^{j}]$ and $[ay^{j},x^{i+d}]$ can always be expressed in terms of $[x^{i+d}y^{j},a]$.
\end{proof}


\section{Description of $B_{2}(A_{2}/\langle x^d+y^d\rangle)$}
We now construct a basis for $B_{2}(A_{2}/\langle x^d+y^d\rangle)$ and calculate the Hilbert-Poincar\' e series for the space.

\begin{theorem}\label{basis2}
The set $\{[x^i,y^j]: 0<i,j<d \}$ constitutes a basis for $B_{2}(A_{2}/\langle x^d+y^d\rangle)$. In particular, $\dim{B_{2}(A_{n}/\langle x^d+y^d\rangle)}=(d-1)^{2}.$
\end{theorem}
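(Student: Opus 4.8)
Write $A=A_{2}/\langle x^{d}+y^{d}\rangle$. The statement amounts to two inequalities: that the $(d-1)^{2}$ commutators $[x^{i},y^{j}]$ with $0<i,j<d$ span $B_{2}(A)$, giving $\dim B_{2}(A)\le (d-1)^{2}$, and that they are linearly independent, giving the reverse inequality. For spanning I would start from the explicit bases of Theorems 2.1 and 2.2 of \cite{DKM}, which (rewritten via Lemma \ref{DKML}) show that the classes $[x^{i},y^{j}]$ with $i,j\ge 1$ already span $B_{2}(A_{2})$; by Lemma \ref{isos} their images span $B_{2}(A)$. So it suffices to show that all of these with $i\ge d$ or $j\ge d$ vanish in $B_{2}(A)$.

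\textbf{Vanishing of the superfluous classes.} The mechanism is the defining relation $x^{d}=-y^{d}$ fed through Lemma \ref{DKML}. For $i\ge d$, writing $x^{i}=-x^{i-d}y^{d}$ and using parts (2) and (3) of Lemma \ref{DKML},
\begin{equation*}
[x^{i},y^{j}]=-[y^{d}x^{i-d},y^{j}]=-\tfrac{j}{j+d}[x^{i-d},y^{j+d}],
\end{equation*}
and symmetrically, for $j\ge d$,
\begin{equation*}
[x^{i},y^{j}]=-\tfrac{i}{i+d}[x^{i+d},y^{j-d}].
\end{equation*}
Applying the first identity and then the second to $[x^{i-d},y^{j+d}]$ (legitimate since $j+d\ge d$) returns the original element up to a scalar,
\begin{equation*}
[x^{i},y^{j}]=\tfrac{j(i-d)}{i(j+d)}[x^{i},y^{j}].
\end{equation*}
Because $1-\tfrac{j(i-d)}{i(j+d)}=\tfrac{d(i+j)}{i(j+d)}\neq 0$, this forces $[x^{i},y^{j}]=0$ whenever $i\ge d$ (the case $i=d$ is even more direct, since then $[x^{i-d},y^{j+d}]=[1,y^{j+d}]=0$); the $x\leftrightarrow y$ symmetric argument disposes of $j\ge d$. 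Thus only the $(d-1)^{2}$ classes with $0<i,j<d$ survive, and they span $B_{2}(A)$.

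\textbf{Independence, the main obstacle.} Relations alone cannot produce the lower bound, so for independence I would use the natural graded map to K\"ahler differentials constructed in the appendix of \cite{DKM}, namely $\phi\colon B_{2}(A)\to \Omega^{1}_{P=0}/\mathbf{d}\Omega^{0}_{P=0}$ with $[a,b]\mapsto\overline{a\,\mathbf{d}b}$, whose construction requires no smoothness hypothesis. Here
\begin{equation*}
\phi([x^{i},y^{j}])=\overline{x^{i}\,\mathbf{d}(y^{j})}=j\,\overline{x^{i}y^{j-1}\,\mathbf{d}y},
\end{equation*}
so it suffices to prove that the $(d-1)^{2}$ classes $\overline{x^{i}y^{j-1}\,\mathbf{d}y}$ with $0<i,j<d$ are linearly independent in $\Omega^{1}_{P=0}/\mathbf{d}\Omega^{0}_{P=0}$: then $\phi$ carries the spanning set to independent vectors, hence the spanning set is itself independent in $B_{2}(A)$, and the two bounds combine to give $\dim B_{2}(A)=(d-1)^{2}$. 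I expect this last step to be the crux, since it is the genuinely commutative part of the argument. Over $R=\mathbb{C}[x,y]/(x^{d}+y^{d})$ one must combine the module relation $x^{d-1}\mathbf{d}x+y^{d-1}\mathbf{d}y=0$ coming from $\mathbf{d}(x^{d}+y^{d})=0$ with the exact forms $\mathbf{d}(x^{a}y^{b})=a\,x^{a-1}y^{b}\,\mathbf{d}x+b\,x^{a}y^{b-1}\,\mathbf{d}y$, and verify that no nontrivial combination of the $x^{i}y^{j-1}\,\mathbf{d}y$ ($1\le i,j\le d-1$) lies in their span. I would organize this degree by degree, reducing every form of total degree $m$ to the shape $c\cdot x^{i}y^{j-1}\,\mathbf{d}y$ and matching the number of surviving monomials against $\#\{(i,j):0<i,j<d,\ i+j=m\}$.
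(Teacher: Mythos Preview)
Your spanning argument is essentially the paper's: pass the DKM basis $\{[x^i,y^j]:i,j>0\}$ of $B_2(A_2)$ through Lemma~\ref{isos}, then kill the classes with $i\ge d$ or $j\ge d$ by the same scalar trick (your two-step reduction is a minor rearrangement of the paper's lemma embedded in the proof).

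The independence argument, however, has a genuine gap. You invoke a map $\phi\colon B_2(A)\to\Omega^1_{P=0}/\mathbf{d}\Omega^0_{P=0}$, $[a,b]\mapsto\overline{\bar a\,\mathbf{d}\bar b}$, and cite the appendix of \cite{DKM} for it. But the map built there runs in the \emph{opposite} direction, $\Omega^{\mathrm{odd}}/\mathbf{d}\Omega^{\mathrm{even}}\twoheadrightarrow B_2$ (see the proof of Theorem~\ref{Thm52}), and a surjection onto $B_2$ gives an upper bound on $\dim B_2$, not the lower bound you need. If instead you mean to define $\phi$ directly by the displayed formula, you owe a proof that it is well defined on $L_2(A)$: that whenever $\sum_k[a_k,b_k]=0$ in $A$ one already has $\sum_k \bar a_k\,\mathbf{d}\bar b_k\in\mathbf{d}\Omega^0_{P=0}$. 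This is not addressed, and it is not a triviality in the singular case. On top of that, the independence of the images $x^iy^{j-1}\mathbf{d}y$ in $\Omega^1_{P=0}/\mathbf{d}\Omega^0_{P=0}$ is only sketched; carrying it out is exactly the Hilbert series computation of Corollary~\ref{isomo2}.

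The paper avoids all of this with an elementary bidegree argument. A relation $\sum c_{ij}[x^i,y^j]=0$ in $B_2(A)$ with $0<i,j<d$ lifts, via Lemma~\ref{isos}, to an equality in $A_2$ between the left side, an element of $L_3(A_2)$, and an element of $\langle x^d+y^d\rangle$. Every monomial on the left has both $\deg_x<d$ and $\deg_y<d$; every monomial coming from the ideal has $\deg_x\ge d$ or $\deg_y\ge d$; and each triple bracket of monomials is homogeneous for the $(\deg_x,\deg_y)$ bigrading. Projecting onto bidegrees with both coordinates $<d$ annihilates the ideal contribution and shows the left side already lies in $L_3(A_2)$, contradicting independence of $\{[x^i,y^j]\}$ in $B_2(A_2)$. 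This is both shorter and self-contained.
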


\begin{proof}

We shall first show that this set spans $B_{2}(A_{2}/\langle x^d+y^d\rangle)$ and then prove its linear independence.

By Proposition 2.1 of \cite{DKM}, we have that $\{[x^i,y^j]: i,j>0\}$ is a spanning set for $B_{2}(A_{2})$. Thus it will also span $B_2(A_{2}/\langle x^d+y^d \rangle)$, as $B_{2}(A_{2}/\langle x^d+y^d \rangle)$ is a quotient of $B_2(A_2)$ by Lemma \ref{isos}. 

\begin{lemma}
\label{nilp}
If $i \geq d$ or if $j \geq d$, then $[x^i,y^j]=0$.
\end{lemma}

\begin{proof}
We will only show the case $i \geq d$; the proof for $j \geq d$ is exactly the same.
Write $i=d+u,$ for $u\geq0$. 

Using $x^d+y^d=0$ and Lemma \ref{DKML}(3), we have
\begin{eqnarray*}
[x^i,y^j]&=&[x^{d}x^{u},y^j]\\
&=&-[y^{d}x^{u},y^{j}]\\
&=&-\frac{j}{j+d}[x^{u},y^{d+j}]\\
&=&-\frac{j}{j+d}[x^{d}y^{j},x^{u}]\\
&=&\frac{j}{j+d} \frac{u}{u+d}[x^{d}x^{u},y^{j}]\\
&=&\frac{j}{j+d} \frac{u}{u+d}[x^{i},y^{j}].
\end{eqnarray*}
As $1\ne\frac{j}{j+d}\cdot\frac{u}{u+d}$, it follows that $[x^{i},y^{j}]=0$.
\end{proof}

So, $\{[x^i,y^j]: 0<i,j<d \}$ is really a spanning set for $B_{2}(A_{2}/\Pid)$. Now we shall demonstrate that it is indeed a basis. Suppose that some linear combination of $[x^{i},y^{j}],\, \, 0<i,j<d$, is zero in $B_{2}(A_{2}/\langle x^{d}+y^{d} \rangle)$. By Lemma \ref{isos}, this linear combination, seen as an element of $L_{2}(A_2)$, belongs to $L_{3}(A_2)$+$A_2\cdot (x^{d}+y^{d}) \cdot A_2$. In other words, it can be expressed as a linear combination of triple brackets of monomials in $A_2$ plus some element $ f \in \langle x^d+y^d \rangle.$ We observe, however, that terms that come from the combination of $[x^{i},y^{j}]$ have degree $<d$ in both $x$ and $y$, while nonzero terms that come from the ideal always have degree either in $x$ or in $y$ (or in both) strictly larger than $d$. Finally, we observe that all four terms that come from each triple bracket of monomials have the same degrees in both $x$ and $y$. Hence, triple brackets of monomials which cancel the terms from the ideal cannot influence the left hand side, and vice versa. But then we can cancel out the ideal's contribution and assert that the linear combination of $[x^{i},y^{j}]$ lies in $L_{3}(A_{2})$. This, however, constitutes a contradiction, because $[x^{i},y^{j}]$ were linearly independent in $B_{2}(A_{2})$. 
\end{proof}

We will later prove that there is an isomorphism between $B_{2}(A_{n}/\Pid)$ and $\Omega^{1}_{x^d+y^d=0}/{\bold d}\Omega^{0}_{x^{d}+y^{d}=0}$. For  now, we can state:

\begin{corollary}
\label{isomo2}
The Hilbert-Poincar\' e series of  $B_{2}(A_{2}/\langle x^d+y^d \rangle)$ and $\Omega^{1}_{x^d+y^d=0}/{\bold d}\Omega^{0}_{x^{d}+y^{d}=0}$ coincide and are given by  $\left(\frac{t-t^{d}}{1-t}\right)^{2}.$
\end{corollary}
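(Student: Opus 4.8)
The plan is to compute the two Hilbert--Poincar\'e series independently and to check that each equals the claimed rational function. The series of $B_{2}(A_{2}/\langle x^{d}+y^{d}\rangle)$ is immediate from Theorem \ref{basis2}: the basis element $[x^{i},y^{j}]$ is homogeneous of degree $i+j$, so
$$h_{B_{2}(A_{2}/\langle x^{d}+y^{d}\rangle)}(t)=\sum_{i=1}^{d-1}\sum_{j=1}^{d-1}t^{i+j}=\Big(\sum_{i=1}^{d-1}t^{i}\Big)^{2}=\left(\frac{t-t^{d}}{1-t}\right)^{2}.$$
It therefore remains to show that $\Omega^{1}_{x^{d}+y^{d}=0}/{\bold d}\Omega^{0}_{x^{d}+y^{d}=0}$ has the same series. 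Write $S=\mathbb{C}[x,y]$, $f=x^{d}+y^{d}$, and $R=S/\langle f\rangle=\Omega^{0}_{x^{d}+y^{d}=0}$, and set $M:=\Omega^{1}_{x^{d}+y^{d}=0}/{\bold d}\Omega^{0}_{x^{d}+y^{d}=0}$.

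I would read off $h_{M}$ from the right-exact sequence
$$R \xrightarrow{\;{\bold d}\;}\Omega^{1}_{x^{d}+y^{d}=0}\longrightarrow M\longrightarrow 0,$$
which gives $h_{M}(t)=h_{\Omega^{1}}(t)-h_{R}(t)+h_{\ker {\bold d}}(t)$, reducing the problem to three series. Since $f$ is a nonzerodivisor in the domain $S$, the resolution $0\to S(-d)\xrightarrow{\,f\,}S\to R\to 0$ yields $h_{R}(t)=(1-t^{d})/(1-t)^{2}$. For $\Omega^{1}$, I would use the conormal presentation $\Omega^{1}_{x^{d}+y^{d}=0}=(R\,{\bold d}x\oplus R\,{\bold d}y)\big/R\cdot(x^{d-1}{\bold d}x+y^{d-1}{\bold d}y)$, that is, the sequence
$$0\longrightarrow R(-d)\xrightarrow{\;1\mapsto {\bold d}f\;}R(-1)^{2}\longrightarrow \Omega^{1}_{x^{d}+y^{d}=0}\longrightarrow 0.$$
Exactness on the left is the one genuine point to verify: the first map is injective precisely because $x^{d-1}$ (equivalently $y^{d-1}$) is a nonzerodivisor in $R$, which holds since $\gcd(x,f)=1$ in the UFD $S$ forces $f\mid xg\Rightarrow f\mid g$. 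This gives $h_{\Omega^{1}}(t)=(2t-t^{d})\,h_{R}(t)$.

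Finally I would show $\ker({\bold d}\colon R\to\Omega^{1})=\mathbb{C}$, so that $h_{\ker{\bold d}}(t)=1$. For homogeneous $g\in R$ of degree $k\geq 1$ with lift $\tilde g\in S$, the condition ${\bold d}g=0$ amounts to $\tilde g_{x}=fa+cf_{x}$ and $\tilde g_{y}=fb+cf_{y}$ for some $a,b,c\in S$; contracting with $(x,y)$ and using the Euler identities $x\tilde g_{x}+y\tilde g_{y}=k\tilde g$ and $xf_{x}+yf_{y}=df$ gives $k\tilde g\in\langle f\rangle$, whence $g=0$ in $R$. This is where characteristic zero and homogeneity enter, and it is the step I expect to need the most care. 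Substituting the three series and simplifying,
$$h_{M}(t)=\big((2t-t^{d})-1\big)\frac{1-t^{d}}{(1-t)^{2}}+1=\frac{t^{2}-2t^{d+1}+t^{2d}}{(1-t)^{2}}=\left(\frac{t-t^{d}}{1-t}\right)^{2},$$
which matches the series of $B_{2}(A_{2}/\langle x^{d}+y^{d}\rangle)$ and completes the proof.
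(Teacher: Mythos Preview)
Your proof is correct and follows essentially the same route as the paper: compute $h_{B_2}$ from the explicit basis of Theorem~\ref{basis2}, and compute $h_{\Omega^{1}/{\bold d}\Omega^{0}}$ as $h_{\Omega^{1}}-h_{\Omega^{0}}+h_{\ker{\bold d}}$ using the presentations of $\Omega^{0}$ and $\Omega^{1}$. The only difference is that you justify in detail two facts the paper simply asserts---that the relation $x^{d-1}{\bold d}x+y^{d-1}{\bold d}y$ generates a free $R$-submodule (so the conormal sequence is short exact) and that $\ker{\bold d}=\mathbb{C}$---which is a welcome addition rather than a departure.
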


\begin{proof}
Using Theorem \ref{basis2},
$$h_{B_{2}(A_{2}/\langle x^{d}+y^{d}\rangle)}(t)=(t+t^2+\ldots t^{d-1})^2=\left( \frac{t-t^d}{1-t}\right)^2.$$

To calculate the Hilbert-Poincar\' e series for $\Omega^{1}_{x^{d}+y^{d}=0}/{\bold d}\Omega^{0}_{x^{d}+y^{d}=0}$, first note that $$\Omega^{0}_{x^{d}+y^{d}=0}=\mathbb{C}[x,y]/\langle x^{d}+y^{d}\rangle,$$ so $$h_{\Omega^{0}}(t)=\frac{1-t^{d}}{(1-t)^{2}}.$$ The only elements in the kernel of map $\bd$ are constants,  $$h_{{\bold d}\Omega^{0}}(t)=\frac{1-t^{d}}{(1-t)^{2}}-1.$$ Similarly, $$\Omega^{1}_{x^{d}+y^{d}}=(\mathbb{C}[x,y]{\bold d} x \oplus \mathbb{C}[x,y]{\bold d} y)/\langle x^{d}+y^{d}, x^{d-1}{\bold d} x+y^{d-1}{\bold d} y \rangle,$$ is a module over $\Omega^{0}$ with two homogeneous generators ${\bold d}x$ and ${\bold d}y$ of degree $1$ and one relation of degree $d$, namely  $x^{d-1}{\bold d}x+y^{d-1}{\bold d}y.$ Thus, $$h_{\Omega^{1}}(t)=\frac{1-t^{d}}{(1-t)^{2}}\cdot(2t-t^{d}).$$ Finally, $$h_{\Omega^{1}/{\bold d}\Omega^{0}}(t)=h_{\Omega^{1}}(t)-h_{{\bold d}\Omega^{0}}(t)=\left(\frac{t-t^{d}}{1-t}\right)^{2}.$$

Clearly, the series coincide.
\end{proof}

\section{Description of $B_{2}({A}_{3}/\langle x^{d}+y^{d}\rangle)$}

\begin{theorem}
\label{basisb2a3}
The following set constitutes a basis for $B_{2}(A_{3}/\langle x^{d}+y^{d}\rangle)$, with the constraints $0<i,j,k$ and $j<d$: 
$$ [x^{i},y^{j}], i<d, \qquad \qquad  [x^{i},z^{k}], \qquad \qquad  [y^{j},z^{k}], $$
$$[x^{i}y^{j},z^{k}],  \qquad \qquad [x^{i}z^{k},y^{j}],  i<d. $$
\end{theorem}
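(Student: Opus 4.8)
The plan is to follow the same two-part strategy as in the proof of Theorem \ref{basis2}: first show the proposed set spans $B_2(A_3/\langle x^d+y^d\rangle)$, then prove linear independence, most likely by passing to a degree-by-degree comparison of Hilbert--Poincar\'e series.

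For the spanning part, I would start from the explicit basis of $B_2(A_3)$ coming from Proposition 2.1 of \cite{DKM}, which by Lemma \ref{isos} descends to a spanning set of $B_2(A_3/\langle x^d+y^d\rangle)$. The three-generator basis for the free algebra consists of brackets of the form $[x^iy^j,z^k]$ together with the two-variable brackets, so the task is to reduce the general spanning set to the listed normal forms. The main tool is the relation $x^d+y^d=0$, which lets me eliminate any occurrence of a power $x^{i+d}$ or $y^{j+d}$. Concretely, whenever $i\ge d$ in a term $[x^iy^j,z^k]$ or $[x^iz^k,y^j]$, I would rewrite $x^d=-y^d$ and apply Lemma \ref{DKML}, exactly as in Lemma \ref{nilp} and Proposition \ref{proposition1}, to push the degree in $x$ below $d$; symmetrically I would use $j<d$ to bound the $y$-degree. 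Proposition \ref{proposition1} is precisely the statement that any bracket of the mixed type with an $x^{i+d}$ factor collapses onto a single normal form, so it should do the bulk of the reduction work for the mixed brackets $[x^iy^j,z^k]$ and $[x^iz^k,y^j]$. I would need to check carefully that after these reductions every residual bracket is one of the five listed families and that no family is over- or under-counted (e.g., that $[x^iy^j,z^k]$ with $i\ge d$ genuinely reduces into the listed forms rather than producing a new type).

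For linear independence I would mirror the degree-argument in Theorem \ref{basis2}. Suppose a linear combination of the listed brackets vanishes in $B_2(A_3/\langle x^d+y^d\rangle)$; by Lemma \ref{isos} it lies, as an element of $L_2(A_3)$, in $L_3(A_3)+A_3\cdot(x^d+y^d)\cdot A_3$. The key observation is again a degree separation: terms arising from the listed brackets have $x$-degree $<d$ and $y$-degree $<d$, whereas any nonzero contribution from the ideal $\langle x^d+y^d\rangle$ must carry $x$- or $y$-degree at least $d$, and the four monomials in each free-algebra triple bracket share the same multidegree in $x,y,z$. Hence the ideal contribution and the $L_3(A_3)$ contribution cannot interfere with the listed terms, reducing the problem to linear independence of the corresponding classes in $B_2(A_3)$, which holds by \cite{DKM}. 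I would then conclude, as in Corollary \ref{isomo2}, by matching the resulting Hilbert--Poincar\'e series against that of $\Omega^1_{x^d+y^d=0}/\bd\Omega^0_{x^d+y^d=0}$.

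The main obstacle I anticipate is the bookkeeping in the spanning step, specifically handling the interaction of the $z$-variable with the $x^d+y^d=0$ relation. In the two-variable case every bracket is forced into the single family $[x^i,y^j]$, but here the presence of $z$ multiplies the number of normal-form families and the reductions via Lemma \ref{DKML}(1)--(3) produce brackets in which $z^k$ must be shuffled into the correct slot. Getting the asymmetry right---why $y$ is capped at $j<d$ uniformly while $x$ is capped at $i<d$ only in the families $[x^i,y^j]$ and $[x^iz^k,y^j]$, but $x^iy^j$ and $x^i$ can carry $i\ge d$ in the mixed-with-$z$ families only after reduction---is the delicate point, and I would verify it by tracking multidegrees through each application of Proposition \ref{proposition1} rather than trusting symmetry.
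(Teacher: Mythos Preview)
Your overall strategy matches the paper's proof: reduce the \cite{DKM} spanning set using $x^d+y^d=0$, Lemma~\ref{nilp}, and Proposition~\ref{proposition1}, then prove linear independence by a multidegree separation argument inside $L_3(A_3)+\langle x^d+y^d\rangle$. However, your linear-independence step contains a genuine error.

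You assert that ``terms arising from the listed brackets have $x$-degree $<d$ and $y$-degree $<d$.'' This is false: the families $[x^i,z^k]$ and $[x^iy^j,z^k]$ carry no bound on $i$, so their $x$-degree can be $\ge d$. (You notice this asymmetry yourself in your final paragraph, but the independence argument you actually write down ignores it.) Consequently your degree-separation argument does not go through as stated: a monomial from the ideal of the form $\beta x^d\gamma$ can have the same $(x,y,z)$-multidegree as a term on the left-hand side, so you cannot immediately peel off the whole ideal contribution. The paper's proof uses only the uniform bound $\deg_y<d$, which all listed families do satisfy, to conclude that $\sum_i\beta_i y^d\gamma_i\in L_3(A_3)$; it then obtains $\sum_i\beta_i x^d\gamma_i\in L_3(A_3)$ by a separate step (phrased in the paper as ``by symmetry''), after which the contradiction with linear independence in $B_2(A_3)$ follows. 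You will need to supply that second step rather than assume both degree bounds at once.

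A minor point on the spanning side: you write that for $[x^iy^j,z^k]$ with $i\ge d$ you would ``push the degree in $x$ below $d$,'' but in fact no reduction is needed for that family---it is kept with arbitrary $i$. The reduction via Proposition~\ref{proposition1} is used only to express $[x^iz^k,y^j]$ with $i\ge d$ in terms of $[x^iy^j,z^k]$.
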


\begin{proof}
Theorem 2.2 of \cite{DKM} states the above list, subject only to the conditions $i,j,k>0$, constitutes a basis for $B_{2}(A_{3})$. By Lemma \ref{isos} it must be a spanning set for $B_{2}(A_{3}/\langle x^{d}+y^{d}\rangle)$.

Because $x^{d}+y^{d}=0$, we can express all elements as linear combinations of those that contain only powers of $y$ up to $d-1$, so we can add the constraint $j<d$. As in Lemma \ref{nilp} we can show that $[x^{i},y^{j}]=0$ if $i\geq d$. Finally, Proposition \ref{proposition1} with $a=z^k$ expresses any $[x^{i}z^{k},y^{j}]$ with $i\geq d$ in terms of $[x^{i}y^{j},z^{k}]$. Thus the above set is a spanning set for $B_{2}(A_{3}/\langle x^d+y^d\rangle).$

We now claim that this spanning set is indeed a basis. 

Assume that some linear combination of the elements above is 0 in $B_{2}(A_{3}/\langle x^{d}+y^{d}\rangle)$. Then, there exist $\alpha\in L_{3}(A_{3})$, a sum of triple brackets, and $\sum_{i}\beta_{i}(x^{d}+y^{d})\gamma_{i} \in A_3 \cdot(x^{d}+y^{d})\cdot A_3$ such that this linear combination equals $\alpha + \sum_{i}\beta_{i}(x^{d}+y^{d})\gamma_{i}$ in $A_{3}$. $\alpha$ is expressible as a linear combination of triple brackets of monomials; each such bracket expands into 4 monomials, all of which have the same degree in $x, y$, and $z$. Thus, any such triple bracket that affects the left-hand side has all $y$-degrees strictly less than $d$, and thus does not affect $\beta_{i}y^{d}\gamma_{i}$; vice-versa, we note that any triple bracket affecting $\beta_{i}y^{d}\gamma_{i}$ cannot affect the left hand side. Thus, all the $\beta_{i}y^{d}\gamma_{i}$ are canceled out by triple brackets. Alternatively stated, we $\sum_{i}\beta_{i}y^{d}\gamma_{i} \in L_{3}(A_{3})$. By symmetry, $\sum_{i}\beta_{i}x^{d}\gamma_{i} \in L_{3}(A_{3})$. Thus, our nontrivial linear combination of elements from the statement is in $L_{3}(A_{3})$. This would mean that these elements were linearly dependent in $B_{2}(A_{3})$, which is impossible, because they are a part of a basis of $B_{2}(A_{3}).$ 
\end{proof}

Again, we will eventually prove the existence of an isomorphism between the spaces for which we can now only say they have the same Hilbert-Poincar\' e series:
\begin{corollary}\label{isomo3}
The Hilbert-Poincar\' e series for $B_{2}(A_{3}/\langle x^{d}+y^{d}\rangle)$ and  $\Omega^{1}_{x^{d}+y^{d}=0}/{\bold d}\Omega^{0}_{x^{d}+y^{d}=0}$ coincide and are given by:
$$\frac{3t^{2}-t^{3}-3t^{d+1}+t^{2d}}{(1-t)^{3}}.$$ 
\end{corollary}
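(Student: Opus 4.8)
The plan is to compute both Hilbert--Poincar\'e series separately and check they agree, mirroring the structure of Corollary~\ref{isomo2}.

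For the $B_2$ side, I would simply count the basis elements from Theorem~\ref{basisb2a3} by degree. Each of the five families is a bracket of two monomials, so its total degree is the sum of the exponents. The family $[x^i,y^j]$ with $0<i<d$, $0<j<d$ contributes generating function $\left(\frac{t-t^d}{1-t}\right)^2$; the families $[x^i,z^k]$ and $[y^j,z^k]$ each contribute $\frac{t}{1-t}\cdot\frac{t}{1-t}$ but with the $y$-variable capped at $d-1$ in the second case, giving $\frac{t}{1-t}\cdot\frac{t}{1-t}$ and $\frac{t-t^d}{1-t}\cdot\frac{t}{1-t}$ respectively; the family $[x^iy^j,z^k]$ with $i,k>0$, $0<j<d$ gives $\frac{t}{1-t}\cdot\frac{t-t^d}{1-t}\cdot\frac{t}{1-t}$; and $[x^iz^k,y^j]$ with $0<i<d$, $0<j<d$, $k>0$ gives $\frac{t-t^d}{1-t}\cdot\frac{t}{1-t}\cdot\frac{t-t^d}{1-t}$. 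I would sum these five rational functions, put everything over $(1-t)^3$, and simplify to obtain the claimed $\frac{3t^2-t^3-3t^{d+1}+t^{2d}}{(1-t)^3}$.

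For the differential-forms side, I would reuse the ideas from the proof of Corollary~\ref{isomo2}, now with three variables. Here $\Omega^0=\mathbb{C}[x,y,z]/\langle x^d+y^d\rangle$ has series $\frac{1-t^d}{(1-t)^3}$, so $h_{{\bold d}\Omega^0}=\frac{1-t^d}{(1-t)^3}-1$ since only constants are killed. The module $\Omega^1$ is free on three generators ${\bold d}x,{\bold d}y,{\bold d}z$ of degree $1$ over $\mathbb{C}[x,y,z]$, modulo the relations coming from the single defining equation, namely $x^d+y^d$ and its differential $x^{d-1}{\bold d}x+y^{d-1}{\bold d}y$ of degree $d$; computing this Hilbert series gives $h_{\Omega^1}(t)=\frac{1-t^d}{(1-t)^3}(3t-t^d)$. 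Subtracting, $h_{\Omega^1/{\bold d}\Omega^0}=h_{\Omega^1}-h_{{\bold d}\Omega^0}$, and a routine simplification should again yield $\frac{3t^2-t^3-3t^{d+1}+t^{2d}}{(1-t)^3}$.

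The main obstacle, such as it is, lies in correctly resolving $\Omega^1$ to get its Hilbert series: one must verify that the only relations are the obvious ones ($x^d+y^d$ acting on each generator, and the single differential relation $x^{d-1}{\bold d}x+y^{d-1}{\bold d}y$) and that there are no higher syzygies among them, so that the naive inclusion--exclusion count $(3t-t^d)\cdot\frac{1}{(1-t)^3}\cdot(1-t^d)$ is exact. This is a genuinely mild point for a hypersurface in three variables, but it is the only step requiring care; the $B_2$-side count is purely combinatorial, and the final verification that the two rational functions coincide is a mechanical polynomial computation over the common denominator $(1-t)^3$.
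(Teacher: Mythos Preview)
Your proposal is correct and follows essentially the same approach as the paper's own proof: both compute the Hilbert--Poincar\'e series of $B_2$ by summing the generating functions of the five basis families from Theorem~\ref{basisb2a3}, and compute the series of $\Omega^1/\mathbf{d}\Omega^0$ via $h_{\Omega^1}-h_{\mathbf{d}\Omega^0}$ using the presentation of $\Omega^1$ as an $\Omega^0$-module with three degree-one generators and one degree-$d$ relation. The paper does not pause over the syzygy point you flag, simply asserting the presentation of $\Omega^1$; your caution there is reasonable but does not alter the argument.
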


\begin{proof}
We can encode the size of the basis from Theorem \ref{basisb2a3} in a series:
$$h_{[x^{i}y^{j},z^{k}]}(t)=\sum_{l} \# \{[x^{i}y^{j},z^{k}] |\,  i+j+k=l, i, j, k >0,\, j>d\}\cdot t^{l}=\frac{t}{1-t}\cdot\frac{t-t^{d}}{1-t}\cdot\frac{t}{1-t}=\frac{t^{2}(t-t^{d})}{(1-t)^{3}}.$$
If we do so similarly for the others, we obtain the following expressions.\\
$$h_{[x^{i}z^{k},y^{j}]}(t)=\frac{t-t^{d}}{1-t}\cdot\frac{t}{1-t}\cdot\frac{t-t^{d}}{1-t}=\frac{t(t-t^{d})^{2}}{(1-t)^{3}}.$$ 
$$h_{[x^{i},y^{j}]}(t)=\frac{(t-t^d)^{2}}{(1-t)^{2}}.$$
$$h_{[y^{j},z^{k}]}(t)=\frac{(t-t^{d})t}{(1-t)^{2}}.$$
$$h_{[x^{i},z^{k}]}(t)=\frac{t^2}{(1-t)^{2}}.$$
Summing these equations, we conclude: 
$$h_{B_{2}(A_{3}/\langle x^{d}+y^{d}\rangle)}(t)=\frac{3t^{2}-t^{3}-3t^{d+1}+t^{2d}}{(1-t)^{3}}.$$

On the other hand, $\Omega^{0}_{x^{d}+y^{d}=0}=\mathbb{C}[x,y,z]/\langle x^{d}+y^{d}\rangle$ is a graded ring with three generators of degree one and a relation of degree $d$, so $$h_{\Omega^{0}}(t)=\frac{1-t^{d}}{(1-t)^{3}}.$$ The kernel of the differential map ${\bold d}$ again consists of just constants. Hence, $$h_{{\bold d}\Omega^{0}}(t)=\frac{1-t^{d}}{(1-t)^{3}}-1.$$ 

Next, $\Omega^{1}_{x^{d}+y^{d}=0}$ is a module over $\Omega^{0}_{x^{d}+y^{d}=0}$ with three generators ${\bold d}x, {\bold d}y$, and ${\bold d} z$ of degree one and one relation $x^{d-1}{\bold d}x+y^{d-1}{\bold d}y$ of degree $d$, so $$h_{\Omega^{1}}(t)=\frac{1-t^{d}}{(1-t)^{3}}\cdot(3t-t^{d}).$$ Finally, $$h_{\Omega_{1}/{\bold d}\Omega_{0}}(t)=h_{\Omega_{1}}(t)-h_{{\bold d}\Omega^{0}}(t)=\frac{3t^{2}-t^{3}-3t^{d+1}+t^{2d}}{(1-t)^{3}}.$$ So, the series coincide.
\end{proof}

\section{A Connection to the Smooth Case}

We want to prove that for $n=2,3$ and generic homogeneous $P$ there exists an analogue of the FS isomorphism, i.e. a map $$ B_{2}(A_{n}/\Pid) \to \Omega^{1}_{P=0}/{\bold d}\Omega^{0}_{P=0}.$$
If we replace $P$ by $P-1$, the resulting algebra has smooth abelianization. If $P$ is generic, then the algebra $A_{n}/\left< P-1 \right>$ satisfies the conditions of the appendix of \cite{DKM}, and there is an isomorphism $$\phi: B_{2}(A_{n}/\left< P-1 \right>) \to \Omega^{1}_{P=1}/{\bold d}\Omega^{0}_{P=1}.$$ We want to show that the associated graded map to this isomorphism is the isomorphism we want. To do this, we need to establish, for generic P, a relationship between the structures we get in the smooth case P-1=0 and in the graded case P=0.


\begin{lemma}
\label{Thm51}
For $P$ a generic homogeneous polynomial, there exists a surjection:
$$B_{2}(A_{n}/ \langle P \rangle) \twoheadrightarrow \gr B_{2}(A_{n}/ \langle P-1 \rangle).$$
\end{lemma}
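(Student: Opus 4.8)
\emph{Plan of proof.} The cleanest route is to keep everything inside the fixed graded algebra $A_n$ and to compare the two $B_2$'s through the presentations supplied by Lemma \ref{isos}. Write $L_i=L_i(A_n)$ (all graded), and set $J=\langle P-1\rangle\cap L_2$. Lemma \ref{isos} gives $B_2(A_n/\langle P\rangle)=L_2/(L_3+\langle P\rangle\cap L_2)$ as a graded space. The same lemma, applied to the relation $P-1$, gives $B_2(A_n/\langle P-1\rangle)=L_2/(L_3+J)$, and under this identification the degree filtration of $A=A_n/\langle P-1\rangle$ becomes the filtration of $L_2/(L_3+J)$ induced by the grading of $L_2$. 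For a subspace $S\subseteq L_2$ let $\mathrm{in}(S)\subseteq L_2$ denote the span of the top-degree homogeneous components of the nonzero elements of $S$; then the standard description of the associated graded of a graded space modulo a subspace yields $\gr B_2(A_n/\langle P-1\rangle)=L_2/\mathrm{in}(L_3+J)$. Both $B_2(A_n/\langle P\rangle)$ and $\gr B_2(A_n/\langle P-1\rangle)$ are thus quotients of the single graded space $L_2$ by graded subspaces, so to produce the desired graded surjection it suffices to prove the inclusion of relation spaces $L_3+\langle P\rangle\cap L_2\subseteq\mathrm{in}(L_3+J)$; the identity of $L_2$ then descends.

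Since $L_3$ is graded we have $\mathrm{in}(L_3+J)\supseteq L_3+\mathrm{in}(J)$, so the inclusion reduces to the single statement $\langle P\rangle\cap L_2\subseteq L_3+\mathrm{in}(J)$. The engine here is a lifting principle: any commutator with a factor visibly built from $P$ is the leading term of an element of $J$. Concretely, for monomials $a,b,c$ one has $[a(P-1)b,c]=[aPb,c]-[ab,c]\in J$, and because $\deg[aPb,c]>\deg[ab,c]$ its leading term is exactly $[aPb,c]$, so $[aPb,c]\in\mathrm{in}(J)$. Now take homogeneous $w\in\langle P\rangle\cap L_2$ and write $w=\sum_j u_jPv_j$ with $u_j,v_j$ monomials. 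Using $u_jPv_j=v_ju_jP+[u_jP,v_j]$ gives $w=cP+\sum_j[u_jP,v_j]$, where $c=\sum_j v_ju_j$. The bracket terms lie in $\mathrm{in}(J)$ by the lifting principle, and since $w$ and these brackets lie in $L_2$ we conclude $cP\in L_2$. Thus the whole statement collapses to the single assertion
$$cP\in L_2\quad\Longrightarrow\quad cP\in L_3+\mathrm{in}(J).$$

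\emph{The main obstacle} is precisely this last assertion about the ``non-bracket'' term $cP$. When $P$ is a single monomial it can be handled directly: as $\tau(cP)=0$ in $A_n/L_2$, one writes $cP$ as a combination of differences of cyclically equivalent monomials, hence of brackets $[\text{letter},\text{rest}]$ whose underlying words are rotations of $P$-divisible monomials; Lemma \ref{DKML}(1) moves the long factor into the first slot and Lemma \ref{DKML}(2) rotates the block $P$ to the front, giving $cP\equiv\sum_k c_k[Ps_k,t_k]\pmod{L_3}$, each summand being in $\mathrm{in}(J)$. For a general homogeneous $P=\sum_\nu\pi_\nu P_\nu$ this breaks down, because a cyclic rotation of a monomial of $cP$ exhibits only a single monomial $P_\nu$ contiguously, and the cyclic manipulations cannot reassemble the full relation $P$; this is the step where the hypothesis on $P$ must be used.

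\emph{Where genericity enters.} For generic homogeneous $P$ one has $\mathrm{in}\langle P-1\rangle=\langle P\rangle$ as two-sided ideals of $A_n$ (equivalently $\gr(A_n/\langle P-1\rangle)=A_n/\langle P\rangle$), and I would use this to lift $w$ rather than to reassemble it. Since $cP\in\langle P\rangle=\mathrm{in}\langle P-1\rangle$, choose $\hat g\in\langle P-1\rangle$ with leading term $cP$; because $cP\in L_2$ the symbol of $\tau(\hat g)$ in degree $\deg(cP)$ vanishes, so the ``defect'' $\tau(\hat g)$ is supported in strictly lower degrees. One then corrects $\hat g$ by elements of $\langle P-1\rangle$ of strictly smaller filtration degree, chosen by downward induction on degree to kill $\tau(\hat g)$ one homogeneous layer at a time; the correctors do not disturb the top term $cP$, and genericity guarantees at each stage that the layer of the defect lies in the relevant image of $\tau$ on the ideal, so the process terminates. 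The resulting element lies in $\langle P-1\rangle\cap L_2=J$ with leading term congruent to $cP$ modulo $L_3$, yielding $cP\in L_3+\mathrm{in}(J)$ and closing the argument. Verifying that this inductive correction is always possible for generic $P$ — that is, that the defect layers are killable without raising the filtration degree — is the delicate point and the real content of the lemma.
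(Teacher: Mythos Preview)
Your setup is clean and correct up to the reduction: identifying $B_2(A_n/\langle P\rangle)=L_2/(L_3+\langle P\rangle\cap L_2)$ and $\gr B_2(A_n/\langle P-1\rangle)=L_2/\mathrm{in}(L_3+J)$, and reducing the surjection to $\langle P\rangle\cap L_2\subseteq L_3+\mathrm{in}(J)$, is sound. The bracket terms $[aPb,c]$ do lie in $\mathrm{in}(J)$ as you say. But the treatment of the residual term $cP$ is a genuine gap, and you acknowledge as much: you sketch an inductive lifting and then write that ``verifying this inductive correction is always possible for generic $P$\dots is the delicate point and the real content of the lemma,'' without actually verifying it. The only genericity you invoke is $\mathrm{in}\langle P-1\rangle=\langle P\rangle$ (equivalently $\gr(A_n/\langle P-1\rangle)\cong A_n/\langle P\rangle$). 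That alone does not make the layer-by-layer correction run: at degree $m$ you would need the defect to lie in $\tau(\langle P\rangle[m])\subseteq B_1(A_n)[m]$, and nothing you have written forces this, nor does the process terminate once the degree drops below $\deg P$.

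The paper closes exactly this gap by importing a second, deeper fact about generic $P$. It argues via the general inclusion $[\gr X,\gr Y]\hookrightarrow\gr[X,Y]$ applied to $X=Y=A_n/\langle P-1\rangle$, and then uses that for $P$ with $A_n/\langle P\rangle$ an asymptotic RCI, Theorem~3.7.7 of \cite{EG} gives $h_{B_1(A_n/\langle P\rangle)}=h_{\gr B_1(A_n/\langle P-1\rangle)}$. Combined with Step~1 this upgrades the inclusion on $L_2$ to an isomorphism $L_2(A_n/\langle P\rangle)\cong\gr L_2(A_n/\langle P-1\rangle)$; in your language this is precisely the equality $L_2\cap\langle P\rangle=\mathrm{in}(J)$, which instantly yields $cP\in\mathrm{in}(J)$ and finishes your argument. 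So the missing ingredient is not a clever rearrangement of brackets but the $B_1$ Hilbert-series comparison from \cite{EG}; once you invoke it, your route and the paper's are equivalent, and without it your inductive lifting cannot be completed.
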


\begin{proof}  
{\bf STEP 1: $A_n/ \langle P \rangle \cong \gr (A_n/ \langle P-1 \rangle)$. }

There is an obvious surjection $A_n/ \langle P \rangle \to \gr(A_n/ \langle P-1 \rangle)$. For generic $P$ the algebra $A_n/\langle P\rangle$ is an noncommutative complete intersection (NCCI) algebra as defined in \cite{EG}, Theorem 3.1.1. This can be seen for example from \cite{EG}, Theorem 3.2.4, as for a generic $P$ condition 2) from that theorem is satisfied. So, by Theorem 3.2.4(5) of \cite {EG}, its noncommutative Koszul complex is acyclic in higher degrees with the homology at $0$ being $A_{n}/\Pid$. If we build the analogous complex for the filtered algebra $A_n/\langle P-1\rangle$, its associated graded complex will be the noncommutative Koszul complex of $A_n/\langle P\rangle$. So, the complex of the filtered algebra is also acyclic in higher degrees, with zero degree homology $A_n/\langle P-1\rangle$. Therefore, 
$\gr(A_n/ \langle P-1 \rangle)\cong A_n/ \langle P \rangle$.



{\bf STEP 2: $L_{2}(A_{n}/\langle P \rangle \cong \gr L_{2}(A_{n}/\langle P-1\rangle).$}

 For $B$ a an algebra with an ascending filtration by nonnegative integers ($B_0\subset B_1\subset\ldots$), and $X,Y$ subspaces of $B$, there exists an injection $[\gr X ,\gr Y ] \hookrightarrow \gr [X,Y]$. If $P$ is generic, then by STEP 1 $\gr(A_n/ \langle P-1 \rangle)\cong A_n/ \langle P \rangle$. So, for $X=Y=A_n/ \langle P-1 \rangle$, we have:
 $$L_{2}( A_n/\langle P\rangle)=[A_n/\langle P\rangle, A_n/\langle P\rangle]=[\gr(A_n/\langle P-1 \rangle),\gr(A_n/\langle P-1 \rangle)] \hookrightarrow$$ $$\hookrightarrow \gr [A_n/\langle P-1 \rangle, A_n/\langle P-1 \rangle]=\gr L_{2}(A_n/\langle P-1 \rangle).$$
 
For generic $P$ the algebra $A_{n}/\langle P \rangle$ is an asymptotic representation complete intersection (asymptotic RCI), as in \cite{EG}, Definition 2.4.8. Then one can conclude from Theorem 3.7.7 in \cite{EG} that the Hilbert-Poincar\' e series of $B_{1}(A_{n}/\langle P \rangle)$ and $\gr B_{1}(A_{n}/\langle P-1 \rangle)$ coincide. This, together with STEP 1 and the existence of an injection from the beginning of STEP 2, implies that the injection is in fact an isomorphism.

{\bf STEP 3: $L_{3}(A_{n}/ \langle P \rangle) \hookrightarrow \gr L_{3}(A_{n}/ \langle P-1 \rangle).$}

For $X=L_{2}(A_{n}/\left< P-1 \right>)$, $Y=L_{1}(A_{n}/\left< P-1 \right>)$ we have:
$$L_{3}(A_{n}/ \langle P \rangle)=[L_{2}(A_{n}/ \langle P \rangle),L_{1}(A_{n}/ \langle P \rangle)]=[\gr L_{2}(A_{n}/ \langle P-1 \rangle), \gr L_{1}(A_{n}/ \langle P-1 \rangle)] \hookrightarrow $$
$$\hookrightarrow \gr[ L_{2}(A_{n}/ \langle P-1 \rangle), L_{1}(A_{n}/ \langle P-1 \rangle)]=\gr  L_{3}(A_{n}/ \langle P-1 \rangle).$$

Steps 2 and 3 now imply the statement.
\end{proof}

\begin{remark}\begin{enumerate}
\item It is useful to note that the isomorphism $\gr(A_{n}/\langle P-1\rangle)\cong A_{n}/\langle P \rangle$ from step 1 of the proof of Lemma  \ref{Thm51} does not hold for every $P$. For example, consider $A_{2}/\langle xyx\rangle.$ Then, $A_{2}/\langle xyx-1\rangle$ is commutative because $xy=xyxyx=yx$, so $A_{2}/\left< xyx-1\right>=\mathbb{C}[x,y]/\left< x^2y-1\right>$ is an algebra of polynomial functions on the curve $x^2y=1$, and has linear growth in its graded components. The algebra $A_{2}/\langle xyx\rangle$, on the other hand, has exponential growth in its graded components, as for any sequence of integers $m_1,m_2,\ldots,m_k$, with all $m_{i}\geq 2$, the elements $xy^{m_1} xy^{m_2}\ldots xy^{m_k}$ are linearly independent. Hence, the map from STEP 1 is surjective, but not injective.\label{xyx}
\item A sufficient condition for statement of  Lemma \ref{Thm51} is for  $P$  to be such that $A_{n}/\Pid$ an asymptotic RCI. An inspection of the proof shows that the only things we used about $A_{n}/\Pid$ is that it is an asymptotic RCI and NCCI, which follows from being an asymptotic RCI. For a more detailed discussion, see \cite{EG}.
\end{enumerate}
\label{remark1}
\end{remark}




\begin{theorem}
\label{Thm52}For generic homogeneous $P$ and $n=2,3$
$$B_{2}(A_{n}/ \langle P-1 \rangle)\cong \Omega^{1}_{P=1}/{\bold d}\Omega^{0}_{P=1}.$$
\end{theorem}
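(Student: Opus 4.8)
The plan is to derive the statement directly from the isomorphism theorem in the appendix of \cite{DKM}. That appendix provides, for an algebra $A$ whose abelianization $A_{ab}$ has smooth spectrum and which satisfies the mild auxiliary hypotheses stated there, an analogue of the FS isomorphism identifying $B_2(A)$ with a de Rham object attached to $\Spec A_{ab}$; in the cases of interest here, where $\dim\Spec A_{ab}\le 2$, that object is exactly $\Omega^1/{\bold d}\Omega^0$. For $A=A_n/\langle P-1\rangle$ we have $A_{ab}=\mathbb{C}[x_1,\ldots,x_n]/\langle P-1\rangle$, so $\Spec A_{ab}$ is the affine hypersurface $\{P=1\}\subseteq\mathbb{C}^n$ and the target is precisely $\Omega^1_{P=1}/{\bold d}\Omega^0_{P=1}$. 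Thus the whole theorem reduces to checking that this particular $A$ meets the hypotheses of the appendix, together with the observation that $n=2,3$ forces $\dim\{P=1\}\le 2$.

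First I would establish smoothness of $\{P=1\}$, and here homogeneity does all the work. Since $P$ is homogeneous of degree $d$, Euler's identity gives $\sum_i x_i\,\partial P/\partial x_i=d\,P$. A singular point of $\{P-1=0\}$ would be a common zero of all the partials $\partial P/\partial x_i$ lying on $\{P=1\}$; evaluating the identity there makes the left-hand side vanish while the right-hand side equals $d\neq 0$, a contradiction. Hence $\{P=1\}$ is a smooth (in particular reduced, nonempty) hypersurface for every nonzero homogeneous $P$ of positive degree, so no genericity is needed for smoothness itself.

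Next I would verify the remaining, homological, hypotheses of the appendix, and this is where genericity enters; they are exactly the conditions already in force in Lemma \ref{Thm51}. For generic $P$ the algebra $A_n/\langle P\rangle$ is an NCCI (indeed an asymptotic RCI) by \cite{EG}, its noncommutative Koszul complex being a resolution. By STEP 1 of Lemma \ref{Thm51} the filtered algebra $A_n/\langle P-1\rangle$ has associated graded $A_n/\langle P\rangle$, so the same homological regularity passes to it and supplies precisely the auxiliary input the appendix demands. The restriction $n=2,3$ enters a second time here: it guarantees $\dim\{P=1\}\le 2$, so that the de Rham object produced by the appendix collapses to $\Omega^1/{\bold d}\Omega^0$, there being no forms of degree exceeding two to contribute; for $n\ge 4$ one would expect extra contributions and the naive identification with $\Omega^1/{\bold d}\Omega^0$ to break down.

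Assembling these inputs, the appendix's isomorphism applies verbatim to $A_n/\langle P-1\rangle$ and yields the desired graded isomorphism $B_2(A_n/\langle P-1\rangle)\cong\Omega^1_{P=1}/{\bold d}\Omega^0_{P=1}$. I expect the main obstacle to be not the geometry, since smoothness is automatic from Euler's identity, but the bookkeeping needed to confirm that every hypothesis of the appendix is genuinely met, and in particular that passing from the graded relation $P$ to the inhomogeneous relation $P-1$ preserves all of the required regularity. That verification is exactly the place where genericity and the \cite{EG} machinery invoked in Lemma \ref{Thm51} do their work.
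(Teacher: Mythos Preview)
Your approach is essentially the paper's: invoke the appendix of \cite{DKM} for the smooth hypersurface $\{P=1\}$, verify its hypotheses, and use $n\le 3$ to collapse $\Omega^{\mathrm{odd}}/{\bold d}\Omega^{\mathrm{even}}$ to $\Omega^1/{\bold d}\Omega^0$. Two differences are worth noting.

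First, your Euler-identity argument is a genuine sharpening: it shows $\{P=1\}$ is smooth for \emph{every} nonzero homogeneous $P$ of positive degree, whereas the paper simply lists smoothness of $(A_n/\langle P-1\rangle)_{ab}$ as one of the genericity conditions without observing that it is automatic.

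Second, the paper is more precise than you about the ``auxiliary hypothesis''. What the appendix actually provides (Proposition~7.8 and Theorem~7.2 of \cite{DKM}) is a \emph{surjection} $\Omega^{\mathrm{odd}}_{P=1}/{\bold d}\Omega^{\mathrm{even}}_{P=1}\twoheadrightarrow B_2(A_n/\langle P-1\rangle)$, whose kernel vanishes exactly when $HC_1$ of the \emph{abelianization} $(A_n/\langle P-1\rangle)_{ab}$ vanishes. The paper obtains this by first using the asymptotic RCI property of $A_n/\langle P\rangle$ together with Theorems~3.7.1 and~3.7.7 of \cite{EG} to get $HC_1\bigl((A_n/\langle P\rangle)_{ab}\bigr)=0$, and then passing from the graded to the filtered case via the same associated-graded argument you invoke. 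Your sketch has the right shape, but you should name the hypothesis explicitly as $HC_1$-vanishing on the commutative side rather than an unspecified ``homological regularity'' of the noncommutative algebra; the NCCI/Koszul property of $A_n/\langle P-1\rangle$ by itself is not what the appendix requires.
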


\begin{proof}
The requirement that $P$ is generic can be made more precise by requiring that $P$ satisfies:
\begin{enumerate} 
\item $A_{n}/ \langle P \rangle$ is an asymptotic RCI, 
\item $\mathbb{C}[x_1,..,x_n]/\langle P-1 \rangle$ is  smooth, 
\end{enumerate}

Proposition 7.8 and Theorem 7.2 of \cite{DKM} in our setting state that if condition (ii) is satisfied, then $$\Omega_{P=1}^{\mathrm{\mathrm{odd}}}/{\bold d}\Omega_{P=1}^{\mathrm{\mathrm{even}}} \twoheadrightarrow B_{2}(A_{n}/ \langle P-1 \rangle)$$ and the kernel of the map is zero if a certain homology  (see \cite{EG}), $HC_{1}((A_{n}/ \langle P-1 \rangle)_{ab})$ is zero.  
Condition (i) and Theorems 3.7.1 and 3.7.7 in  \cite{EG} guarantee that $HC_{1}((A_{n}/ \langle P \rangle_{ab})=0$. The complex that calculates $HC_{\bullet}((A_{n}/ \langle P-1 \rangle)_{ab})$ is filtered and its associated graded complex is the one calculating $HC_{\bullet}((A_{n}/ \langle P \rangle_{ab})$; so $HC_{1}((A_{n}/ \langle P \rangle_{ab})=0$ implies $HC_{1}((A_{n}/ \langle P-1 \rangle_{ab})=0$. 

Finally, because $n=2, 3$, 
 $$\Omega_{P=1}^{\mathrm{\mathrm{odd}}}/{\bold d}\Omega_{P=1}^{\mathrm{\mathrm{even}}}=\Omega^{1}_{P=1}/{\bold d}\Omega^{0}_{P=1}.$$
\end{proof}

\begin{lemma} 
\label{Thm53}
If $P$ has no repeated factors, $$\gr \left(\Omega^{1}_{P=1}/{\bold d}\Omega^{0}_{P=1} \right) \cong \Omega^{1}_{P=0}/{\bold d}\Omega^{0}_{P=0}.$$
\end{lemma}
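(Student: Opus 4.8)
The plan is to compare the two filtered objects directly by tracking how the filtration on $\Omega^1_{P=1}/\mathbf{d}\Omega^0_{P=1}$ interacts with the defining relations, and to identify the associated graded with the homogeneous version $\Omega^1_{P=0}/\mathbf{d}\Omega^0_{P=0}$. Both sides are built from the same raw data: the polynomial ring $\mathbb{C}[x_1,\ldots,x_n]$, the free module on symbols $\mathbf{d}x_1,\ldots,\mathbf{d}x_n$, and the differential $\mathbf{d}$. The only difference between the $P=0$ and $P=1$ cases is whether we quotient by the ideal generated by $P$ or by $P-1$, and correspondingly whether the Kähler-differential relation is $\mathbf{d}P=0$ with $P$ or $P-1$. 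Since $\mathbf{d}(P-1)=\mathbf{d}P$, the differential relation $\mathbf{d}P=0$ is literally the same in both presentations; the genuine difference lives only in the degree-$0$ relation ($P$ versus $P-1$) that defines $\Omega^0$, and it is this inhomogeneity that the filtration must absorb.

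First I would make the filtrations explicit. On the polynomial ring $\mathbb{C}[x_1,\ldots,x_n]$ we have the standard ascending filtration by total degree; this descends to a filtration on $\Omega^0_{P=1}=\mathbb{C}[x_1,\ldots,x_n]/\langle P-1\rangle$ and, via the free-module structure, to $\Omega^1_{P=1}$ and to the quotient $\Omega^1_{P=1}/\mathbf{d}\Omega^0_{P=1}$. I would then compute the associated graded of each piece. The key commutative-algebra input is that, since $P-1$ is a nonzerodivisor (because $P$ is, its leading form $P$ having no repeated factors and hence the hypersurface being reduced, so $\langle P-1\rangle$ is a proper ideal with $\gr\langle P-1\rangle=\langle P\rangle$), the associated graded of $\mathbb{C}[x_1,\ldots,x_n]/\langle P-1\rangle$ is exactly $\mathbb{C}[x_1,\ldots,x_n]/\langle P\rangle=\Omega^0_{P=0}$. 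The leading term of $P-1$ in the degree filtration is $P$, so passing to $\gr$ replaces $P-1$ by $P$; this is the degree-$0$ version of the ``leading relation'' phenomenon already used in Step~1 of Lemma~\ref{Thm51}.

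Next I would carry the same leading-form analysis through the module $\Omega^1$. The module $\Omega^1_{P=1}$ is the free module $\bigoplus_i (\mathbb{C}[x_1,\ldots,x_n]/\langle P-1\rangle)\,\mathbf{d}x_i$ modulo the single relation $\mathbf{d}P=\sum_i (\partial_i P)\,\mathbf{d}x_i=0$, which is already homogeneous of degree $d-1$ since $P$ is homogeneous of degree $d$. Because this relation is homogeneous, passing to associated graded does not alter it, and I would argue that $\gr\Omega^1_{P=1}\cong\Omega^1_{P=0}$, with the no-repeated-factors hypothesis ensuring that the relation module behaves well (no unexpected syzygies appear in the leading terms, so the Hilbert series computed in Corollaries~\ref{isomo2} and~\ref{isomo3} match up). The map $\mathbf{d}:\Omega^0\to\Omega^1$ is homogeneous of degree $0$ in the total grading, so $\gr(\mathbf{d}\Omega^0_{P=1})=\mathbf{d}(\gr\Omega^0_{P=1})=\mathbf{d}\Omega^0_{P=0}$, provided $\mathbf{d}$ is strictly compatible with the filtration. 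Combining these identifications and using exactness of $\gr$ on strictly filtered short exact sequences yields $\gr(\Omega^1_{P=1}/\mathbf{d}\Omega^0_{P=1})\cong\Omega^1_{P=0}/\mathbf{d}\Omega^0_{P=0}$.

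The main obstacle, and the step that needs genuine care rather than routine bookkeeping, is verifying \emph{strictness} of the filtration at the quotient stage, i.e. that $\gr$ commutes with taking the quotient $\Omega^1/\mathbf{d}\Omega^0$. A priori one only has a surjection $\gr(\Omega^1_{P=1}/\mathbf{d}\Omega^0_{P=1})\twoheadrightarrow \gr\Omega^1_{P=1}/\gr(\mathbf{d}\Omega^0_{P=1})$, and to upgrade it to an isomorphism I must rule out the possibility that some element of $\mathbf{d}\Omega^0_{P=1}$ has a leading form not lying in $\mathbf{d}\Omega^0_{P=0}$ — equivalently, that the inclusion $\mathbf{d}\Omega^0_{P=1}\hookrightarrow\Omega^1_{P=1}$ is strictly filtered. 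This is exactly where the no-repeated-factors hypothesis on $P$ does its work: it guarantees the hypersurface $\{P=0\}$ is reduced, so that the Koszul-type syzygies among $P$ and $\partial_i P$ are the expected ones and no leading-term collapse occurs. Rather than chase this by hand, I would prefer to close the argument by a Hilbert-series comparison: the associated graded can only add dimension in each degree relative to what a naive count predicts, so the surjection above is an isomorphism as soon as the two sides have equal Hilbert series in each degree; and the Hilbert series of the right-hand target $\Omega^1_{P=0}/\mathbf{d}\Omega^0_{P=0}$ is exactly the one computed in Corollaries~\ref{isomo2} and~\ref{isomo3}, while $\dim\gr V[m]=\dim V[m]$ for any filtered space $V$. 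This reduces the whole lemma to checking that the Hilbert series of $\Omega^1_{P=1}/\mathbf{d}\Omega^0_{P=1}$ — computed from the homogeneous relations, which are insensitive to the $P$ versus $P-1$ distinction at the level of leading forms — agrees degreewise with the target, thereby forcing the surjection to be an isomorphism.
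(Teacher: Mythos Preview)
Your overall strategy --- identify $\gr\Omega^i_{P=1}$ with $\Omega^i_{P=0}$ for $i=0,1$, check that $\gr\bd=\bd$, and assemble --- is exactly the paper's. You also correctly isolate the delicate point: one must know that $\gr(\bd\Omega^0_{P=1})$ coincides with $\bd\Omega^0_{P=0}$, i.e.\ that $\bd$ is strict for the filtration. However, two things go wrong in your handling of this step.

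First, the direction of the a priori map is reversed. For any filtered map $f\colon X\to Y$ one has $\mathrm{im}(\gr f)\subseteq \gr(\mathrm{im}\,f)$ inside $\gr Y$, hence a surjection
\[
\Omega^1_{P=0}/\bd\Omega^0_{P=0}
\;=\;\gr\Omega^1_{P=1}\big/\mathrm{im}(\gr\bd)
\;\twoheadrightarrow\;
\gr\Omega^1_{P=1}\big/\gr(\mathrm{im}\,\bd)
\;=\;\gr\!\left(\Omega^1_{P=1}/\bd\Omega^0_{P=1}\right),
\]
not the other way round. Second, and more seriously, your proposed Hilbert-series closure is circular: $\Omega^1_{P=1}/\bd\Omega^0_{P=1}$ is only filtered, not graded, and computing the dimensions of its filtration steps requires knowing $\dim\!\left((\Omega^1_{P=1})_m\cap\bd\Omega^0_{P=1}\right)$, which is precisely the strictness you are trying to prove. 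The clean way to establish strictness is direct: if $f\in(\Omega^0_{P=1})_k$ with $k$ minimal and $\bd f\in(\Omega^1_{P=1})_{k-1}$, then the symbol $\bar f\in\Omega^0_{P=0}[k]$ is nonzero but satisfies $\bd\bar f=0$, forcing $\bar f$ constant and hence $k=0$. This uses that $\ker(\bd\colon\Omega^0_{P=0}\to\Omega^1_{P=0})=\mathbb{C}$, valid because $\mathbb{C}[x_1,\ldots,x_n]/\langle P\rangle$ is reduced --- one genuine use of ``no repeated factors.'' The paper's other, more substantial, use of the hypothesis is in proving $\gr\Omega^1_{P=1}=\Omega^1_{P=0}$: one must show that the leading form of every element $a\,\bd P+(P-1)b$ lies in $\langle\bd P,P\rangle$, and when the top-degree pieces $a_m\bd P+Pb_m$ cancel one invokes $\gcd(P,\partial_iP)=1$ to write $a_m=Pf$, $b_m=-f\,\bd P$ and lower the degree by induction. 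You gesture at this (``Koszul-type syzygies'') but do not carry it out; that argument, not a dimension count, is where the hypothesis actually enters.
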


\begin{proof}
We will show that $\gr \Omega^{0}_{P=1} =\Omega^{0}_{P=0}$, $\gr \Omega^{1}_{P=1} =\Omega^{1}_{P=0}$ and that $\gr \bold d=\bold d$. This implies the result.


For any graded commutative algebra $A$, any filtered $A$-module $M$ and any submodule $I$ in $M$, we can consider the associated graded modules $\gr M, \gr I, \gr (M/I)$  Dente the $m$-th filtered piece of $M$ by $M_{m}$, and the $m$-th graded piece of the associated graded by $\gr M[m]=M_{m}/M_{m-1}$. There is an isomorphism $$\gr (M/I)\cong \gr M / \gr I.$$

Let us apply this first to $A=\mathbb{C}[x_{1},\ldots x_{n}]$, $M=A$, $I=\left< P-1 \right>$. Then $$\gr \left< P-1 \right>=\gr A(P-1)\cong AP=\Pid.$$ The isomorphism $\psi: AP\to \gr A(P-1)$ is  sending $aP, a \in A[m]$ to $aP$ in the $m$-th graded piece of $\gr A(P-1)$. It is well defined as $aP$ maps to the image of the element $a(P-1)$ in the graded module $\gr A(P-1)$. For the same reason it is surjective. If $aP, a \in A[m]$, maps to $0$ in $\gr A(P-1)$, then $aP=0\in A(P-1)_{m}/A(P-1)_{m-1}$, so there exists $b\in A$ such that $b(P-1)\in A(P-1)_{m-1}$ and $aP+bP-b=0\in A(P-1)_{m}$. However, this means (looking at degrees) that $aP=0$. So, $$\gr \Omega^0_{P-1}=\gr M/I \cong M/\gr I =\mathbb{C}[x_{1},\ldots x_{n}]/\Pid =\Omega^0_{P}.$$

Next, do the same for $A=\mathbb{C}[x_{1},\ldots x_{n}]$, $M=\oplus_{i} A\bd x_{i}$, $I=\left< \bd P, P-1 \right>$. We claim that if $P$ has no double factors, then $\gr \left< \bd P, P-1 \right>= \left< \bd P, P\right>$. There exists a map as above $$\psi '':\left< \bd P, P\right>\to \gr \left< \bd P, P-1 \right>.$$ If $P$ has degree $d$, so does $\bd P$, and for any $a\in A[m-d]$, $b\in M[m-d]$, the element $a\bd P +Pb$ cannot be $0$ in $\gr \left< \bd P, P-1 \right>$ unless it is $0$ in $\gr \left< \bd P, P \right>$. So, the map is injective. To prove surjectivity, let $a\in A$, $b\in M$,  and consider $a\bd P+b(P-1)$. We want to show that the top degree part of this element is indeed in $\left< \bd P, P\right>$.

If $\deg a\ne \deg b$, this is true. Now assume $\deg a=\deg b=m$, and denote the $m$-th degree parts of them by $a_{m},b_{m}$. We will proceed by induction on $m$. The image of $a\bd P+(P-1)b$ in the graded module is either $a_{m}\bd P+Pb_{m}$, which is in $\left< \bd P, P\right>$, or $a_{m}\bd P+Pb_{m}=0$ and the image is in a lower degree then $m+d$. If $a_{m}\bd P+Pb_{m}=0$, then, using that $P$ and $\bd P$ have no common factors, there exists $c\in A$ such that $a_{m}=Pf, b_{m}=-f\bd P$. So we can write $a=Pf+\bar{a}, b=-f\bd P+\bar{b}$, with $\deg \bar{a}, deg \bar{b}<m$, and then $$a\bd P+b(P-1)=Pf\bd P+\bar{a}\bd P-fP\bd P +f\bd P +\bar{b}(P-1)=(\bar{a}+f)\bd P+\bar{b}(P-1).$$ As both $\bar{a}+f$ and $\bar{b}$ have lower degrees than $m$, we can conclude that the top degree of $(\bar{a}+f)\bd P+\bar{b}(P-1)$ is in $\left< \bd P, P\right>$ by previous argument or by induction assumption. So, 
 $$\gr \Omega^1_{P-1}=\gr \oplus_{i}\mathbb{C}[x_{1},\ldots x_{n}]\bd x_{i}/\left< \bd P, P-1\right> \cong\oplus_{i}\mathbb{C}[x_{1},\ldots x_{n}]\bd x_{i}/\left< \bd P, P\right>= \Omega^1_{P} .$$

The maps $\bd: \Omega^0_{P}\to \Omega^1_{P}$ and $\gr \bd: \gr \Omega^0_{P-1}\to \gr \Omega^1_{P-1}$ coincide on the generators, so they are the same. 
\end{proof}


\section{Main Result}
The following lemma will allow us to connect our Hilbert-Poincar\' e series computations to the results from Section 5. 


\begin{lemma} \label{goodP}
$P=x^{d}+y^{d}$ is generic enough in $A_{2}$ and in $A_{3}$ to satisfy claims of Lemma \ref{Thm51}, Theorem \ref{Thm52} and Lemma \ref{Thm53}; in other words for $n=2,3$, the algebra $A_{n}/ \langle P\rangle$ is an asymptotic RCI, $(A_{n}/ \langle P-1 \rangle)_{ab}$ is smooth, and $P\in (A_{n})_{ab}$ does not have repeated factors.
\end{lemma}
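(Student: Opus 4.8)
The plan is to verify the three itemized claims separately, dispatching the two elementary geometric conditions first and reserving the representation-theoretic condition for last.

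For the no-repeated-factors claim I would factor $P=x^{d}+y^{d}$ over $\mathbb{C}$. Writing $\zeta_{1},\ldots,\zeta_{d}$ for the $d$ distinct roots of $t^{d}=-1$, we have $x^{d}+y^{d}=\prod_{k=1}^{d}(x-\zeta_{k}y)$, a product of pairwise non-proportional linear forms; hence $P$ is square-free, and this holds verbatim in $(A_{3})_{ab}=\mathbb{C}[x,y,z]$ since $z$ does not occur. For smoothness of $(A_{n}/\langle P-1\rangle)_{ab}$ I would compute the Jacobian ideal of $P-1$: the partials are $(dx^{d-1},dy^{d-1})$ for $n=2$ and $(dx^{d-1},dy^{d-1},0)$ for $n=3$, which vanish simultaneously only on $\{x=y=0\}$. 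Since no point of $\{x=y=0\}$ satisfies $P=1$, the gradient is nonvanishing along the hypersurface $\{P=1\}$, so it is smooth in both cases (for $n=3$ the variety is the smooth plane curve times the free $z$-line).

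The substantive claim is that $A_{n}/\langle P\rangle$ is an asymptotic RCI in the sense of \cite{EG}, Definition 2.4.8, and here I would proceed in two stages. First, as in STEP 1 of Lemma \ref{Thm51}, I would confirm that $A_{n}/\langle x^{d}+y^{d}\rangle$ is an NCCI by verifying condition 2) of \cite{EG}, Theorem 3.2.4; concretely this amounts to checking that its Hilbert series is the complete-intersection series $1/(1-nt+t^{d})$, which follows from acyclicity of the noncommutative Koszul complex of the single relation. Second, for the asymptotic RCI property itself I would pass to the representation schemes $\mathrm{Rep}_{N}(A_{n}/\langle P\rangle)=\{(X_{1},\ldots,X_{n})\in\mathrm{Mat}_{N}^{\,n}:X_{1}^{d}+X_{2}^{d}=0\}$ and aim to show they are complete intersections of the expected dimension $(n-1)N^{2}$. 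The map $(X,Y)\mapsto X^{d}+Y^{d}$ is dominant onto $\mathrm{Mat}_{N}$ (a generic matrix admits $d$-th roots), so the generic fibre has the expected dimension; the point is to control the special fibre over $0$, and this is exactly where square-freeness of $P$ must enter, ruling out the excess-dimension behavior that occurs for relations such as $x^{2}y$ (Remark \ref{remark1}).

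The main obstacle is this last step: asymptotic RCI is a genuinely representation-theoretic and homological condition, not one checkable by the elementary Jacobian computation used for smoothness, and it is precisely the condition that fails in the $x^{2}y$ example even though NCCI there holds. I expect the cleanest route is not to estimate $\mathrm{Rep}_{N}$ by hand but to invoke the sufficient criteria of \cite{EG} (Theorems 3.7.1 and 3.7.7 and the discussion around Definition 2.4.8), reducing asymptotic RCI to the commutative input that the singular locus of $\{P=0\}$ has the expected small dimension --- isolated at the origin for $n=2$, and the free $z$-line for $n=3$ --- together with square-freeness. Verifying that $x^{d}+y^{d}$ meets the precise hypotheses of those theorems, rather than merely the NCCI condition, is the crux of the argument.
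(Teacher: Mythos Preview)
Your treatment of the two elementary conditions --- square-freeness via the explicit factorization $x^{d}+y^{d}=\prod_{k}(x-\zeta_{k}y)$ and smoothness of $\{P=1\}$ via the Jacobian --- is exactly what the paper does.

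The gap is in the asymptotic RCI step. Theorems 3.7.1 and 3.7.7 of \cite{EG}, which you propose to invoke, are \emph{consequences} of the asymptotic RCI property (they compute $HC_{1}$ and match Hilbert series once asymptotic RCI is known); they are not sufficient criteria for it. Likewise, verifying NCCI via Theorem 3.2.4(2) does not help: NCCI is weaker than asymptotic RCI (cf.\ Remark \ref{remark1}(2)), so establishing it first gains you nothing toward the stronger statement. Your fallback of directly bounding $\dim\mathrm{Rep}_{N}$ is the definition, but you correctly sense this is not tractable by hand, and the heuristic reduction to ``singular locus has small dimension'' is not a theorem in \cite{EG}.

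The paper's actual argument is combinatorial and uses a different result, Theorem 5.4.1 of \cite{EG}, whose hypothesis is that the relation have a \emph{non-overlapping} leading monomial (one which does not begin and end with the same nontrivial subword). The point is that $x^{d}+y^{d}$ itself has leading monomial $x^{d}$, which \emph{is} overlapping, so a change of variables is required: set $a=y-\xi x$ with $\xi^{d}=-1$. Then the $x^{d}$ terms cancel and the leading monomial (for the ordering by $x$-degree, ties broken by leftward position of the $x$'s) becomes $x^{d-1}a$, which is non-overlapping. Theorem 5.4.1 then gives asymptotic RCI for the quotient by this leading monomial, and an associated-graded argument transfers it to $A_{n}/\langle P\rangle$. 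This change-of-variables trick is the missing idea in your proposal.
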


\begin{proof}
First we demonstrate that $A_{n}/ \langle P \rangle$ satisfies the sufficient condition for being an asymptotic RCI given by Theorem 5.4.1 in \cite{EG}. A change of variables $a=y-\xi x$, where $\xi^{d}=-1$, makes $x^{d}+y^{d}$ into a homogeneous polynomial of degree $d$ in $x$ and $a$. We can impose an ordering  on monomials in $x$ and $a$ by $M>M'$ if $\deg_{x}M>\deg_{x}M'$ or $\deg_{x}M=\deg_{x}M'$ and the sum of positions of $x$, counting from the left, is smaller for $M$ than for $M'$. (For example, $x^{2}a^{2}>xa^{3}$, and $x^{2}a^{2}>xaxa$.) In this ordering, the leading monomial of $x^{d}+(a+\xi x)^{d}$ is not $x^{d}$, which appears with coefficient $1+\xi^{d}$=0, but $x^{d-1}a$, which appears with the nonzero coefficient $\xi^{d-1}$. So the leading monomial of $P$ satisfies the conditions of Theorem 5.4.1 from \cite{EG} (it is ``non overlapping", meaning there does not exist a nontrivial sub-word $w$ that the monomial both begins and ends with). That implies that the quotient of $A_n$ by this leading monomial is an asymptotic RCI. But the quotient of $A_n$ by a leading monomial can be considered an associated graded algebra of the quotient of $A_n$ by the entire polynomial, where filtration of $A_n$ is given by the above ordering on monomials. However, if an associated graded algebra is an asymptotic RCI, then so is the original algebra $A_{n}/ \langle P \rangle$.

Because the system $$P=x^d+y^d=1 \qquad \frac{\partial}{\partial x}P=dx^{d-1}=0 \qquad \frac{\partial}{\partial y}P=dy^{d-1}=0$$ has no solution, $(A_{n}/ \langle P-1 \rangle)_{ab}$ is smooth. 

Finally, $P$, seen as a commutative polynomial, does not have repeated factors. Indeed, $P=x^d+y^d=(x-\xi y)(x-\xi^3 y)\ldots (x-\xi^{2d-1} y)$.
 
\end{proof}

\begin{theorem}
\label{OnePoly}
For $n=2,3$, and $P=x^{d}+y^{d}$ the associated graded map to the FS isomorphism
$$\phi: B_{2}(A_{n}/ \langle P-1 \rangle)\to \Omega^{1}_{P=1}/{\bold d}\Omega^{0}_{P=1}$$ is the isomorphism $$\gr \phi :  B_{2}(A_{n}/ \langle P \rangle)\to \Omega^{1}_{P=0}/{\bold d}\Omega^{0}_{P=0}.$$
\end{theorem}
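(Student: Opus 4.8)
The plan is to assemble Theorem \ref{OnePoly} entirely from the pieces already established, with the only genuine work being a dimension-counting argument that forces a surjection to be an isomorphism. First I would lay out the chain of maps and isomorphisms we have in hand. By Lemma \ref{goodP}, the polynomial $P=x^d+y^d$ satisfies the hypotheses of Lemma \ref{Thm51}, Theorem \ref{Thm52}, and Lemma \ref{Thm53} for $n=2,3$. Theorem \ref{Thm52} gives the FS isomorphism $\phi: B_{2}(A_{n}/\langle P-1\rangle)\cong \Omega^{1}_{P=1}/{\bold d}\Omega^{0}_{P=1}$ in the smooth case. Since $\phi$ is an isomorphism of filtered vector spaces (both sides carry the filtration induced from $A_n$, and $\phi$ respects it because it is defined on generators by a homogeneous rule), passing to associated graded spaces yields an isomorphism $\gr\phi: \gr B_{2}(A_{n}/\langle P-1\rangle)\cong \gr\left(\Omega^{1}_{P=1}/{\bold d}\Omega^{0}_{P=1}\right)$.

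Next I would identify both associated graded spaces with the objects appearing in the statement. For the right-hand side, Lemma \ref{Thm53} directly gives $\gr\left(\Omega^{1}_{P=1}/{\bold d}\Omega^{0}_{P=1}\right)\cong \Omega^{1}_{P=0}/{\bold d}\Omega^{0}_{P=0}$, since $P$ has no repeated factors. For the left-hand side, Lemma \ref{Thm51} supplies a \emph{surjection} $B_{2}(A_{n}/\langle P\rangle)\twoheadrightarrow \gr B_{2}(A_{n}/\langle P-1\rangle)$ rather than an isomorphism, so $\gr\phi$ only gives us a surjection $B_{2}(A_{n}/\langle P\rangle)\twoheadrightarrow \Omega^{1}_{P=0}/{\bold d}\Omega^{0}_{P=0}$ at this stage. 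The composite of this surjection with the identifications above is the candidate map in the theorem.

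The crux is then to upgrade this surjection to an isomorphism, and here the computations of Sections 3 and 4 do the work. A surjection of graded vector spaces is an isomorphism precisely when the source and target have equal (and finite) Hilbert--Poincar\'e series in every degree. Corollary \ref{isomo2} (for $n=2$) and Corollary \ref{isomo3} (for $n=3$) establish exactly that $h_{B_{2}(A_{n}/\langle x^d+y^d\rangle)}(t)=h_{\Omega^{1}_{P=0}/{\bold d}\Omega^{0}_{P=0}}(t)$, since both were computed to equal $\left(\frac{t-t^d}{1-t}\right)^2$ and $\frac{3t^2-t^3-3t^{d+1}+t^{2d}}{(1-t)^3}$ respectively. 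Because all graded pieces are finite-dimensional and agree in each degree, the surjection constructed above must be injective degree by degree, hence an isomorphism. This completes the identification of $\gr\phi$ as the desired isomorphism $B_{2}(A_{n}/\langle P\rangle)\to \Omega^{1}_{P=0}/{\bold d}\Omega^{0}_{P=0}$.

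I expect the main obstacle to be purely bookkeeping: checking that $\phi$ is genuinely filtered so that $\gr\phi$ is defined, and that the two associated-graded identifications are compatible with the surjection of Lemma \ref{Thm51} in a way that makes the final composite well defined rather than merely an abstract isomorphism of dimensions. Once the maps are confirmed to fit together, the dimension count from Corollaries \ref{isomo2} and \ref{isomo3} closes the argument immediately, since no finite-dimensional graded surjection between spaces of equal Hilbert series can fail to be injective.
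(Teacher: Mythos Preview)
Your proposal is correct and follows essentially the same approach as the paper: assemble the chain $B_{2}(A_{n}/\langle P\rangle)\twoheadrightarrow \gr B_{2}(A_{n}/\langle P-1\rangle)\cong \gr(\Omega^{1}_{P=1}/{\bold d}\Omega^{0}_{P=1})\cong \Omega^{1}_{P=0}/{\bold d}\Omega^{0}_{P=0}$ from Lemmas \ref{Thm51}, \ref{Thm53}, \ref{goodP} and Theorem \ref{Thm52}, then invoke Corollaries \ref{isomo2} and \ref{isomo3} to match Hilbert series and upgrade the surjection to an isomorphism. Your explicit acknowledgment of the bookkeeping issue---that $\phi$ must be filtered and the identifications must be compatible so the composite really is $\gr\phi$---is a point the paper itself treats only informally (``with proper identifications'').
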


\begin{proof}

Lemma \ref{Thm51}, Theorem \ref{Thm52}, Lemma \ref{Thm53} and Lemma \ref{goodP} give us a series of graded morphisms:

$$B_{2}(A_{n}/ \langle P \rangle) \twoheadrightarrow \gr(B_{2}(A_{n}/ \langle P-1 \rangle)) \cong  \gr(\Omega^{1}_{P=1}/{\bold d}\Omega^{0}_{P=1})  \cong \Omega^{1}_{P=0}/{\bold d}\Omega^{0}_{P=0}.$$

The first and the last map are natural, and the middle map is $\gr \phi$. Because the Hilbert-Poincar\' e series of $B_{2}(A_{n}/ \langle P \rangle)$ and $\Omega^{1}_{P=0}/{\bold d}\Omega^{0}_{P=0}$ are the same by Theorems \ref{isomo2} and \ref{isomo3}, the first map is also an isomorphism. Composing all maps, we get the isomorphism $\Omega^{1}_{P=0}/{\bold d}\Omega^{0}_{P=0}  \cong  B_{2}(A_{n}/ \langle P \rangle)$, which can, with proper identifications, be thought of as $\gr \phi$.
\end{proof}



\begin{theorem}\label{main}
For $n=2,3$, and generic homogeneous $P$ the associated graded map to the FS isomorphism $$\phi: B_{2}(A_{n}/ \langle P-1 \rangle)\to \Omega^{1}_{P=1}/{\bold d}\Omega^{0}_{P=1}$$ is the isomorphism $$\gr \phi :  B_{2}(A_{n}/ \langle P \rangle)\to \Omega^{1}_{P=0}/{\bold d}\Omega^{0}_{P=0}.$$
\end{theorem}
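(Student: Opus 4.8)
The plan is to reproduce, for a generic homogeneous $P$, the exact chain of graded morphisms already used in the proof of Theorem \ref{OnePoly}; the only change is that every hypothesis is now invoked for a generic $P$ rather than for $x^d+y^d$. Since a generic $P$ satisfies all the conditions collected in Lemma \ref{goodP}—that is, $A_n/\langle P\rangle$ is an asymptotic RCI, $(A_n/\langle P-1\rangle)_{ab}$ is smooth, and $P$ has no repeated factors—Lemmas \ref{Thm51} and \ref{Thm53} together with Theorem \ref{Thm52} assemble into the composite
$$B_{2}(A_{n}/\langle P\rangle)\twoheadrightarrow \gr B_{2}(A_{n}/\langle P-1\rangle)\cong\gr\!\left(\Omega^{1}_{P=1}/\bd\Omega^{0}_{P=1}\right)\cong\Omega^{1}_{P=0}/\bd\Omega^{0}_{P=0},$$
whose middle arrow is $\gr\phi$. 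As in Theorem \ref{OnePoly}, the whole statement reduces to showing that the first arrow, the surjection of Lemma \ref{Thm51}, is in fact an isomorphism; equivalently, that $B_2(A_n/\langle P\rangle)$ and $\Omega^1_{P=0}/\bd\Omega^0_{P=0}$ have equal Hilbert--Poincar\'e series.

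The next step is to observe that the Hilbert series of the target depends only on $d=\deg P$ and $n$, not on the particular $P$. Indeed, $\Omega^0_{P=0}=\mathbb{C}[x_1,\dots,x_n]/\langle P\rangle$ has series $\frac{1-t^d}{(1-t)^n}$ for every nonzero degree-$d$ form, while $\Omega^1_{P=0}$ is presented over $\Omega^0_{P=0}$ by the $n$ generators $\bd x_i$ of degree $1$ and the single degree-$d$ relation $\bd P$. When $P$ has no repeated factors, $\bd P$ spans a free rank-one submodule, so the computation carried out in Corollaries \ref{isomo2} and \ref{isomo3} applies verbatim and produces the same series $H_{d,n}(t)$ found there for $x^d+y^d$. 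Combined with the surjection above, this already gives one inequality: $h_{B_2(A_n/\langle P\rangle)}(t)\ge H_{d,n}(t)$ coefficientwise, for every generic $P$.

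The reverse inequality is the heart of the argument, and here I would invoke the explicit dimension counts of Sections 3 and 4. The quantity $\delta_m(P):=\dim B_2(A_n/\langle P\rangle)[m]$ is, by Lemma \ref{isos}, the codimension in the fixed space $B_2(A_n)[m]$ of the image of $\langle P\rangle\cap L_2(A_n)$, a subspace that varies algebraically with the coefficients of $P$; since ranks are lower semicontinuous, $\delta_m$ is an upper semicontinuous function on the affine space of degree-$d$ forms, so its generic value equals the minimum it attains. The special polynomial $x^d+y^d$, although not necessarily generic, provides an upper bound for this minimum: by Theorem \ref{OnePoly} (via the explicit bases of Theorems \ref{basis2} and \ref{basisb2a3}) one has $\delta_m(x^d+y^d)=H_{d,n}[m]$, whence the generic value of $\delta_m$ is $\le H_{d,n}[m]$. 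Together with the previous paragraph this forces equality on the dense open locus satisfying Lemma \ref{goodP}, so there the surjection of Lemma \ref{Thm51} is an isomorphism, the composite is an isomorphism, and identifying it with $\gr\phi$ as in Theorem \ref{OnePoly} completes the proof.

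I expect the semicontinuity step to be the main obstacle. One must present $B_2(A_n/\langle P\rangle)[m]$ uniformly in $P$—as the cokernel of a family of matrices depending polynomially on the coefficients of $P$—and in particular control how the intersection $\langle P\rangle\cap L_2(A_n)$ varies; realizing $B_2$ directly as homology of the reduced noncommutative complex of \cite{EG}, which is built algebraically from $P$, is the natural way to obtain this. The subtle point worth emphasizing is that $x^d+y^d$ enters \emph{only} to bound the generic (minimal) fibre dimension from above and need not itself be generic; genericity is used solely to secure the hypotheses of Lemmas \ref{Thm51}, \ref{Thm53} and Theorem \ref{Thm52} and to place $P$ in the open stratum where $\delta_m$ achieves its minimum.
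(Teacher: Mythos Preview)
Your proposal is correct and follows essentially the same approach as the paper: assemble the chain of graded morphisms from Lemma~\ref{Thm51}, Theorem~\ref{Thm52}, and Lemma~\ref{Thm53}, reduce to a Hilbert-series equality, observe that the target series depends only on $d$ and $n$, and then use the explicit computation for $x^d+y^d$ together with upper semicontinuity of $\dim B_2(A_n/\langle P\rangle)[m]$ in $P$ to close the remaining inequality. Two small remarks: Lemma~\ref{goodP} is stated only for $P=x^d+y^d$, so you should say the generic $P$ satisfies the \emph{conditions listed there} rather than cite the lemma itself; and your discussion of the semicontinuity step is in fact more explicit than the paper's, which simply asserts that $\dim B_2(A_n/\langle P\rangle)[l]$ is constant for generic $P$ and can only increase for special $P$ without spelling out the cokernel/rank argument you sketch.
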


\begin{proof}
As in Theorem \ref{OnePoly}, for a generic $P$ and using Lemma \ref{Thm51}, Theorem \ref{Thm52}, Lemma \ref{Thm53}, we have the following series of graded morphisms:
$$B_{2}(A_{n}/ \langle P \rangle) \twoheadrightarrow \gr B_{2}(A_{n}/ \langle P-1 \rangle) \cong  \gr(\Omega^{1}_{P=1}/{\bold d}\Omega^{0}_{P=1})  \cong \Omega^{1}_{P=0}/{\bold d}\Omega^{0}_{P=0}.$$ This implies that 
\begin{equation}\dim B_{2}(A_{n}/ \langle P \rangle) [l]\ge \dim \Omega^{1}_{P=0}/{\bold d}\Omega^{0}_{P=0}[l] \end{equation} for all $l$. 
If $\deg P=d$, then \begin{equation}\dim  \Omega^{1}_{P=0}/{\bold d}\Omega^{0}_{P=0}[l]=\dim  \Omega^{1}_{x^d+y^d=0}/{\bold d}\Omega^{0}_{x^d+y^d=0}[l],\end{equation} as this is the same for all $P$ of the same degree. On the other hand, $\dim B_2(A_n/\langle P\rangle)[l]$ is going to be the same for generic $P$, and higher for special $P$. In this aspect, we can only say that for generic $P$, \begin{equation}\dim B_2(A_n/\langle x^d+y^d\rangle )[l] \ge \dim B_2(A_n/\langle P\rangle )[l].\end{equation} However, we have shown in Theorems \ref{isomo2} and \ref{isomo3} that \begin{equation}\dim B_2(A_n/\langle x^d+y^d\rangle )[l] =\dim  \Omega^{1}_{x^d+y^d=0}/{\bold d}\Omega^{0}_{x^d+y^d=0}[l], \end{equation} so putting (2)-(5) together we can conclude that $$\dim B_2(A_n/\langle P\rangle)[l]=\dim  \Omega^{1}_{P=0}/{\bold d}\Omega^{0}_{P=0}[l],$$ so the map $$B_{2}(A_{n}/ \langle P \rangle) \twoheadrightarrow \gr B_{2}(A_{n}/ \langle P-1 \rangle) $$ is an isomorphism, and so is the composite map $$\gr \phi: B_{2}(A_{n}/ \langle P \rangle)\to \Omega^{1}_{P=0}/{\bold d}\Omega^{0}_{P=0}..$$
\end{proof}

\begin{conjecture}
For $n=4$, the map $\gr \phi$ from the statement of Theorem \ref{main} is surjective with a finite dimensional kernel; i.e. $ B_{2}(A_{n}/ \langle P \rangle)[l]\cong \Omega^{1}_{P=0}/{\bold d}\Omega^{0}_{P=0}[l]$ for $l$ large enough.
\end{conjecture}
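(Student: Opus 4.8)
The plan is to rerun the chain of graded morphisms from the proof of Theorem \ref{main}, tracking exactly where the hypothesis $n\le 3$ was used. Lemma \ref{Thm51} is stated for arbitrary $n$ and needs only that $A_n/\langle P\rangle$ be an asymptotic RCI (hence NCCI), so for generic $P$ it still supplies a surjection $B_2(A_4/\langle P\rangle)\twoheadrightarrow \gr B_2(A_4/\langle P-1\rangle)$. The first place $n\le 3$ entered was the very last line of the proof of Theorem \ref{Thm52}: the appendix of \cite{DKM} gives $\Omega^{\mathrm{odd}}_{P=1}/\mathbf{d}\Omega^{\mathrm{even}}_{P=1}\twoheadrightarrow B_2(A_4/\langle P-1\rangle)$ with vanishing kernel once $HC_1=0$ (which still holds by genericity, via \cite{EG}), and only the collapse $\Omega^{\mathrm{odd}}/\mathbf{d}\Omega^{\mathrm{even}}=\Omega^1/\mathbf{d}\Omega^0$ required $n\le 3$. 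For $n=4$ the abelianization is three-dimensional, so $\Omega^3\ne 0$ and this collapse fails. I would therefore prove the $n=4$ analogue $B_2(A_4/\langle P-1\rangle)\cong \Omega^{\mathrm{odd}}_{P=1}/\mathbf{d}\Omega^{\mathrm{even}}_{P=1}$ verbatim, and extend Lemma \ref{Thm53} from the generators $\mathbf{d}x_i$ and single relations it now treats to the entire de Rham complex $\Omega^0\to\Omega^1\to\Omega^2\to\Omega^3$, showing $\gr\Omega^i_{P=1}\cong\Omega^i_{P=0}$ for every $i$ and $\gr\mathbf{d}=\mathbf{d}$; the no-repeated-factors hypothesis from Lemma \ref{goodP} should again govern the associated graded of the higher relation modules. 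The result is a surjection $B_2(A_4/\langle P\rangle)\twoheadrightarrow \Omega^{\mathrm{odd}}_{P=0}/\mathbf{d}\Omega^{\mathrm{even}}_{P=0}$.

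Next I would separate the target into the summand the conjecture concerns and an error term. Since $\mathbf{d}\Omega^0\subseteq\Omega^1$ and $\mathbf{d}\Omega^2\subseteq\Omega^3$ live in different form-degrees, the grading by form-degree splits
$$\Omega^{\mathrm{odd}}_{P=0}/\mathbf{d}\Omega^{\mathrm{even}}_{P=0}=\bigl(\Omega^1_{P=0}/\mathbf{d}\Omega^0_{P=0}\bigr)\oplus\bigl(\Omega^3_{P=0}/\mathbf{d}\Omega^2_{P=0}\bigr).$$
Composing the surjection above with the projection onto the first summand yields precisely the map $\gr\phi\colon B_2(A_4/\langle P\rangle)\twoheadrightarrow \Omega^1_{P=0}/\mathbf{d}\Omega^0_{P=0}$ of the statement, so surjectivity is immediate. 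Its kernel then has two contributions: the discarded summand $\Omega^3_{P=0}/\mathbf{d}\Omega^2_{P=0}$, and the kernel of the first surjection $B_2(A_4/\langle P\rangle)\twoheadrightarrow \gr B_2(A_4/\langle P-1\rangle)$.

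For the first contribution, observe that $\Omega^4_{P=0}=0$, so $\Omega^3_{P=0}/\mathbf{d}\Omega^2_{P=0}$ is exactly the top cohomology $H^3$ of the de Rham complex of the graded ring $R=\mathbb{C}[x_1,\dots,x_4]/\langle P\rangle$. For generic $P$ the zero locus is a cone with an isolated singularity at the origin, and the de Rham cohomology of a graded isolated hypersurface singularity is finite-dimensional in positive degrees (it is controlled by the primitive cohomology of the smooth projective surface $\{P=0\}\subset\mathbb{P}^3$). I would make this concrete by computing the Euler characteristic $\sum_i(-1)^i h_{\Omega^i_{P=0}}(t)$ from the finite $R$-presentations of the $\Omega^i_{P=0}$ afforded by the Koszul complex of the partials $\partial_j P$, and checking that the infinite tails cancel so that $\sum_i(-1)^ih_{H^i}(t)$ is a polynomial; together with finiteness of $H^1,H^2$ this forces $h_{H^3}$ to be a polynomial, so this summand vanishes in all large degrees.

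The genuine difficulty is the second contribution, and I expect it to be the main obstacle. For $n=2,3$ the first surjection was upgraded to an isomorphism by matching Hilbert series using the explicit bases of Theorems \ref{basis2} and \ref{basisb2a3}; no such basis is available for $n=4$. Tracing Lemma \ref{Thm51}, the kernel of $B_2(A_4/\langle P\rangle)\twoheadrightarrow\gr B_2(A_4/\langle P-1\rangle)$ is exactly the cokernel of the injection $L_3(A_4/\langle P\rangle)\hookrightarrow\gr L_3(A_4/\langle P-1\rangle)$ of STEP 3, whereas STEP 2 killed the analogous $L_2$-cokernel using the $B_1$ Hilbert-series identity of \cite{EG}, Theorem 3.7.7. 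What is needed is the corresponding statement one homological degree higher: that the asymptotic RCI property of $A_4/\langle P\rangle$ forces the $L_3$ injection to be an isomorphism in all sufficiently large degrees. I anticipate this will require a finiteness or vanishing result for the relevant higher Hochschild/cyclic homology of $A_4/\langle P\rangle$ — going beyond the $HC_1$ vanishing already invoked — rather than any combinatorial bookkeeping, and it is precisely this gap that keeps the statement conjectural. Granting it, the finiteness of $\Omega^3_{P=0}/\mathbf{d}\Omega^2_{P=0}$ together with the finiteness of this cokernel bounds $\ker\gr\phi$, giving $B_2(A_4/\langle P\rangle)[l]\cong\Omega^1_{P=0}/\mathbf{d}\Omega^0_{P=0}[l]$ for all large $l$.
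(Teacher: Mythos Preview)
This statement is a conjecture in the paper; there is no proof to match against, only a sketched strategy. Your route and the paper's diverge precisely at what you identify as the main obstacle. The paper does not propose to bound the cokernel of the $L_3$ injection abstractly via higher Hochschild or cyclic homology. Instead it proposes to repeat the $n=2,3$ template: first establish the result for the single polynomial $P=x^d+y^d+z^d+w^d$ by computing the Hilbert series of $B_2(A_4/\langle P\rangle)$ directly (the paper conjectures it to be $h_{\Omega^1_{P=0}/\bd\Omega^0_{P=0}}(t)+\bigl((t-t^d)/(1-t)\bigr)^4$), and then pass to generic $P$ by the semicontinuity argument of Theorem~\ref{main}. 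In that scheme, once the Hilbert series of $B_2$ is shown to match that of $\Omega^{\mathrm{odd}}_{P=0}/\bd\Omega^{\mathrm{even}}_{P=0}$, the first surjection in your chain is forced to be an isomorphism outright, and the entire finite-dimensional kernel of $\gr\phi$ comes from the discarded $\Omega^3/\bd\Omega^2$ summand. So the paper trades your missing homological input for a combinatorial one---an explicit basis or Hilbert-series computation for $B_2$ in four generators, in the spirit of Theorems~\ref{basis2} and~\ref{basisb2a3}---and neither obstacle is actually resolved. Both approaches agree that the extension of Lemma~\ref{Thm53} to all $\Omega^i$ and the appearance of the extra $\Omega^3/\bd\Omega^2$ piece are what distinguishes $n=4$ from $n\le 3$.

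One correction to your argument: the claim $\Omega^4_{P=0}=0$ is false. For $R=\mathbb{C}[x_1,\dots,x_4]/\langle P\rangle$ one has $\Omega^4_R\cong R/(\partial_1 P,\dots,\partial_4 P)$, the Jacobian ring, which for generic homogeneous $P$ has dimension $(d-1)^4$ (Euler's relation puts $P$ in the ideal of partials). This does not spoil the finiteness of $\Omega^3_{P=0}/\bd\Omega^2_{P=0}$, since you now need finiteness of both $H^3$ and of $\bd\Omega^3\subseteq\Omega^4$, and the latter is immediate; but the identification of $\Omega^3/\bd\Omega^2$ with $H^3$ alone needs adjusting.
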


The proof would be analogous to the $n=2,3$ case, with some changes. First one needs to prove the isomorphism  $$B_{2}(A_{n}/ \langle P \rangle)[l]\cong \Omega^{1}_{P=0}/{\bold d}\Omega^{0}_{P=0}[l]$$ for $l>>0$ and a specific $P$. In this case, the polynomial $P=x^d+y^d$ is not generic enough; MAGMA computations of the dimensions of graded components of $B_{2}(A_{n}/ \langle P \rangle)$ show that they are larger then those of $\Omega^{1}_{P=0}/{\bold d}\Omega^{0}_{P=0}$ at low degrees. The same is true for $P=x^d+y^d+z^d$. The polynomial $P=x^d+y^d+z^d+w^d$ is, according to MAGMA computations, generic enough as the beginning of the Poincar\' e-Hilbert series for $B_{2}(A_{n}/ \langle P \rangle)$ and  $\Omega^{1}_{P=0}/{\bold d}\Omega^{0}_{P=0}$ match. The Hilbert-Poincar\' e series for $\Omega^{1}_{P=0}/{\bold d}\Omega^{0}_{P=0}$ is easily calculated, as in Corollary \ref{isomo2} and Corollary \ref{isomo3}, to be $$h_{\Omega^{1}_{P=0}/{\bold d}\Omega^{0}_{P=0}}(t)=\frac{((1-t)^4-1+4t-4t^{d+1}+t^{2d})}{(1-t)^{4}}.$$ We conjecture that the series for $B_{2}(A_{4}/ \langle P \rangle)$ is $$h_{B_{2}(A_{4}/\langle P \rangle)}(t)=\frac{((1-t)^4-1+4t-4t^{d+1}+t^{2d})}{(1-t)^{4}} + \left(\frac{t(1-t^{d-1})}{(1-t)}\right)^{4}.$$ The second term of it is a polynomial (so it influences only finitely many degrees), and it corresponds to the facts that for $n=4$, the analogue of a finite dimensional space  $\Omega^{\mathrm{3}}/{\bold d}\Omega^{\mathrm{2}}$ also appears here, and to the fact that the smooth variety corresponding to the abelianization of $A_{n}/\left<P-1 \right>$ will have cohomology of dimension $(d-1)^4$ at degree $3$.  

After this, generalization to a generic $P$ should be proved as it was in the $n=2,3$ case in Theorem \ref{main}. 

For $n>4$, a similar statement should hold, but different techniques should be considered for a precise statement and a proof.

\appendix


\section{Computational Results}
Here is a series of tables obtained in the first phase of the project using MAGMA, containing dimensions of the graded pieces of $B_{i}(A)$.The columns are labeled by $l$, the degree or graded component we are interested in, and rows by the $B_i$.  Each row, in turn, gives coefficients of the Hilbert-Poincar\' e series of the appropriate $B_i$. We used similar tables constructed for $B_{2}(A_{n}/\left< x^d+y^d\right>), n=2,3$, to derive conjectures about bases of these spaces. These conjectures became Theorems \ref{basis2} and \ref{basisb2a3}.

\begin{table}[htbp]
\begin{center}
\caption{$B_{i}(A_{2}/\left< x^2+y^2 \right>)$}
\begin{tabular}{|c||c|c|c|c|c|c|}\hline\hline
$B_{i}[l]$ & \textbf{1} & \textbf{2} & \textbf{3} & \textbf{4} & \textbf{5} & \textbf{6} \\ \hline\hline
\textbf{$B_{3}$} & 0 & 0 & 2 & 0 & 0 & 0 \\ \hline
\textbf{$B_{4}$} & 0 & 0 & 0 & 2 & 0 & 0 \\ \hline
\textbf{$B_{5}$} & 0 & 0 & 0 & 0 & 4 & 0 \\ \hline
\end{tabular}
\end{center}
\label{A2Degree2}
\end{table}




\begin{table}[h]
\begin{center}
\caption{$B_{i}(A_{3}/\left< x^2+y^2 \right>)$}
\begin{tabular}{|c||c|c|c|c|c|c|}\hline\hline
$B_{i}[l]$ & \textbf{1} & \textbf{2} & \textbf{3} & \textbf{4} & \textbf{5} & \textbf{6} \\ \hline\hline
\textbf{$B_{3}$} & 0 & 0 & 8 & 15 & 19 & 23 \\ \hline
\textbf{$B_{4}$} & 0 & 0 & 0 & 17 & 45 & 59 \\ \hline
\end{tabular}
\end{center}
\label{A3Degree2term2}
\end{table}

\begin{table}[h]
\begin{center}
\caption{$B_{i}(A_{3}/\left< x^2+y^2+z^2 \right>)$}
\begin{tabular}{|c||c|c|c|c|c|c|}\hline\hline
$B_{i}[l]$ & \textbf{1} & \textbf{2} & \textbf{3} & \textbf{4} & \textbf{5} & \textbf{6} \\ \hline\hline
\textbf{$B_{3}$} & 0 & 0 & 8 & 15 & 16 & 20 \\ \hline
\textbf{$B_{4}$} & 0 & 0 & 0 & 18 & 45 & 48 \\ \hline
\end{tabular}
\end{center}
\label{A3Degree2term3}
\end{table}

\begin{table}[h]
\begin{center}
\caption{$B_{i}(A_{3}/\left< x^3+y^3 \right>)$}
\begin{tabular}{|c||c|c|c|c|c|c|}\hline\hline
$B_{i}[l]$ & \textbf{1} & \textbf{2} & \textbf{3} & \textbf{4} & \textbf{5} & \textbf{6} \\ \hline\hline
\textbf{$B_{3}$} & 0 & 0 & 8 & 24 & 39 & 48 \\ \hline
\textbf{$B_{4}$} & 0 & 0 & 0 & 18 & 71  & 133 \\ \hline
\end{tabular}
\end{center}
\label{A3Degree3term2}
\end{table}

\begin{table}[h]
\begin{center}
\caption{$B_{i}(A_{3}/\left< x^3+y^3+z^3 \right>)$}
\begin{tabular}{|c||c|c|c|c|c|c|}\hline\hline
$B_{i}[l]$ & \textbf{1} & \textbf{2} & \textbf{3} & \textbf{4} & \textbf{5} & \textbf{6} \\ \hline\hline
\textbf{$B_{3}$} & 0 & 0 & 8 & 24 & 39 & 45 \\ \hline
\textbf{$B_{4}$} & 0 & 0 & 0 & 18 & 72  & 135 \\ \hline
\end{tabular}
\end{center}
\label{A3Degree3term3}
\end{table}






\begin{table}[!ht]
\begin{center}
\caption{$B_{i}(A_{4}/\left< x^2+y^2+z^2+w^2 \right>)$}
\begin{tabular}{|c||c|c|c|c|c|c|c|}\hline\hline
$B_{i}[l]$ & \textbf{1} & \textbf{2} & \textbf{3} & \textbf{4} & \textbf{5} & \textbf{6} \\ \hline\hline
\textbf{$B_{2}$} & 0 & 6 & 16 & 31 & 48 & 70 \\ \hline
\textbf{$B_{3}$} & 0 & 0 & 20 & 64 & 124 &  \\ \hline
\textbf{$B_{4}$} & 0 & 0 & 0 & 60 &   & \\ \hline
\end{tabular}
\end{center}
\label{A4Degree2}
\end{table}

\begin{table}[!ht]
\begin{center}
\caption{$B_{i}(A_{4}/\left< x^3+y^3+z^3+w^3 \right>)$}
\begin{tabular}{|c||c|c|c|c|c|c|}\hline\hline
$B_{i}[l]$ & \textbf{1} & \textbf{2} & \textbf{3} & \textbf{4} & \textbf{5} \\ \hline\hline
\textbf{$B_{2}$} & 0 & 6 & 20 & 42 & 72 \\ \hline
\textbf{$B_{3}$} & 0 & 0 & 20 & 80 & 188 \\ \hline
\end{tabular}
\end{center}
\label{A4Degree3}
\end{table}

\end{document}